\setlist[itemize]{noitemsep, topsep=1pt, leftmargin=30pt}
\newcommand\bcdot{\ensuremath{
  \mathchoice
   {\mskip\thinmuskip\lower0.2ex\hbox{\scalebox{1.6}{$\cdot$}}\mskip\thinmuskip}}
   {\mskip\thinmuskip\lower0.2ex\hbox{\scalebox{1.6}{$\cdot$}}\mskip\thinmuskip}
   {\lower0.3ex\hbox{\scalebox{1.2}{$\cdot$}}}
   {\lower0.3ex\hbox{\scalebox{1.2}{$\cdot$}}}
}
\theoremstyle{plain}
\newtheorem{theo}{Theorem}[section]
\newtheorem{prop}[theo]{Proposition}
\newtheorem{fact}[theo]{Fact}
\theoremstyle{definition}
\newtheorem{definition}[theo]{Definition}
\theoremstyle{plain}
\newtheorem{lemma}[theo]{Lemma}
\newtheorem{theorem}[theo]{Theorem}
\newtheorem{corollary}[theo]{Corollary}
\theoremstyle{definition}
\newtheorem{remark}[theo]{Remark}
\theoremstyle{plain}
\newtheorem{thmint}{Theorem}
\newtheorem{propint}[thmint]{Proposition}
\theoremstyle{definition}
\renewcommand{\=}{:=}
\renewcommand{\a}{\alpha}
\renewcommand{\d}{\delta}
\newcommand{\e}{\varepsilon}
\newcommand{\f}{\varphi}
\newcommand{\w}{\omega}
\newcommand{\s}{\sigma}
\renewcommand{\t}{\tau}
\newcommand{\D}{\Delta}
\renewcommand{\L}{\Lambda}
\newcommand{\Q}{\Theta}
\newcommand{\W}{\Omega}
\newcommand{\bR}{\mathbb{R}}
\newcommand{\bZ}{\mathbb{Z}}
\newcommand{\bN}{\mathbb{N}}
\newcommand{\fG}{\mathsf{G}}
\newcommand{\fSL}{\mathsf{SL}}
\newcommand{\fSU}{\mathsf{SU}}
\newcommand{\gm}{\mathfrak{m}}
\newcommand{\so}{\mathfrak{so}}
\newcommand{\cC}{\mathcal{C}}
\newcommand{\cU}{\mathcal{U}}
\newcommand{\cV}{\mathcal{V}}
\newcommand{\eB}{\EuScript{B}}
\newcommand{\eD}{\EuScript{D}}
\newcommand{\eU}{\EuScript{U}}
\newcommand{\eW}{\EuScript{W}}
\newcommand{\st}{{\operatorname{st}}}
\newcommand{\p}{\partial}
\newcommand{\rar}{\rightarrow}
\newcommand{\la}{\langle}
\newcommand{\ra}{\rangle}
\renewcommand{\square}{\kern1pt\vbox
{\hrule height 0.6pt\hbox{\vrule width 0.6pt\hskip 3pt \vbox{\vskip
6pt}\hskip 3pt\vrule width 0.6pt}\hrule height0.6pt}\kern1pt}
\renewcommand{\=}{\  \raisebox{0.15mm}{:} {=} \ }
\newcommand{\td}{\mathtt{d}}
\DeclareMathOperator\Tr{Tr}
\DeclareMathOperator\Id{Id}
\DeclareMathOperator\Dom{Dom}
\DeclareMathOperator{\vspan}{span}
\DeclareMathOperator\Exp{\operatorname{Exp}}
\newcommand{\tdist}{\mathtt{d}\hspace{-0.5pt}\mathtt{i}\hspace{-0.5pt}\mathtt{s}\hspace{-0.5pt}\mathtt{t}}
\renewcommand\Im{\operatorname{Im}}
\newcommand\Ric{\operatorname{Ric}}
\newcommand\Rm{\operatorname{Rm}}
\newcommand{\inj}{\operatorname{inj}}
\newcommand{\ol}{\overline}
\newcommand{\zero}{\operatorname{o}}
\def\<#1,#2>{\langle\,#1,\,#2\,\rangle}
\newcommand{\aac}{\`a}
\newcommand{\Aac}{\`A}
\newcommand{\Math}{{\it Mathematica\raise5 pt\hbox{$\scriptscriptstyle \circledR$}7}}
\newcommand{\n}{\nabla}
\newcommand{\beq}{\begin{equation}}
\newcommand{\eeq}{\end{equation}}
\def\<#1,#2>{\langle\,#1,\,#2\,\rangle}
\newcommand{\arr}{\begin{array}{rlll}}
\newcommand{\ea}{\end{array}}
\newcommand{\bea}{\begin{eqnarray}}
\newcommand{\eea}{\end{eqnarray}}
\newcommand{\bean}{\begin{eqnarray*}}
\newcommand{\eean}{\end{eqnarray*}}
\def\sideremark#1{\ifvmode\leavevmode\fi\vadjust{
\vbox to0pt{\hbox to 0pt{\hskip\hsize\hskip1em
\vbox{\hsize3cm\tiny\raggedright\pretolerance10000
\noindent #1\hfill}\hss}\vbox to8pt{\vfil}\vss}}}
\newcounter{ssig}
\newcounter{ttig}
\title[A compactness theorem for locally homogeneous spaces]{A compactness theorem for locally homogeneous spaces}
\author{Francesco Pediconi}
\subjclass[2010]{53C30, 53C21}
\keywords{Locally homogenous spaces, convergence of Riemannian manifolds, Cheeger-Gromov compactness.}
\thanks{This work was supported by project PRIN 2017 ``Real and Complex Manifolds: Topology, Geometry and holomorphic dynamics'' (code 2017JZ2SW5) and by GNSAGA of INdAM} 
\begin{document}
\begin{abstract} We prove the existence and uniqueness of geometric models of local isometry classes of locally homogeneous spaces with sectional curvature $|\sec|\leq 1$. Moreover, we show that the set of geometric models is compact in the pointed $\cC^{1,\a}$-topology. \end{abstract}

\maketitle


\section{Introduction} \setcounter{equation} 0

The Gromov Precompactness Theorem states that the space of compact Riemannian manifolds with uniform lower Ricci curvature bound and uniform upper diameter bound is precompact in the Gromov-Haussdorf topology (see \cite{Gro}). Moreover, even though no reasonable description is known so far for the closure of this class (see \cite[Rem 10.7.5]{BBI}), under some suitable assumptions there exist deep theorems concerning the structure of the limit spaces (see e.g. \cite{CC1,CC2}). One of its many applications is an alternative proof of the Cheeger Finiteness Theorem, stating that the class of compact Riemannian manifolds with uniform upper and lower sectional curvature bounds, uniform lower volume bound and uniform upper diameter bound contains only finitely many diffeomorphism types (see \cite{Che}). \smallskip

In this paper we will state a compactness theorem for locally homogeneous Riemannian spaces, that are Riemannian manifolds $(M,g)$ on which the pseudogroup of local isometries acts transitively, i.e. for any $x,y \in M$ there exists a local isometry of $(M,g)$ mapping $x$ to $y$. Let us stress that any globally homogeneous space is, in particular, locally homogeneous. However, in general, locally homogeneous spaces are incomplete and not locally isometric to any globally homogeneous space (see \cite{Kow,Mos}). We mention e.g. the quotient $(\fSU(2){\times}\fSU(2)) / \D S^1_{\a}$, where the circle $S^1$ is irrationally diagonally immersed in $\fSU(2){\times}\fSU(2)$. \smallskip

Locally homogeneous spaces naturally occur as limits of globally homogeneous spaces. We recall that, by the Cheeger-Gromov Compactness Theorem, a non-collapsing sequence of globally homogeneous spaces with bounded geometry subconverges to a globally homogeneous limit space in the pointed $\cC^{\infty}$-topology (see e.g. \cite[Thm 2.3]{Ha}). Here, a sequence of globally homogeneous spaces $(M^{(n)},g^{(n)})$ is said to have {\it bounded geometry} if for any integer $k\geq0$ there exists $C_k>0$ such that $$\big|\Rm(g^{(n)})\big|_{g^{(n)}}+\big|\n^{g^{(n)}}\!\Rm(g^{(n)})\big|_{g^{(n)}}+\,{\dots}\,+\big|\big(\n^{g^{(n)}}\big)^{\!k}\!\Rm(g^{(n)})\big|_{g^{(n)}} \leq C_k \quad \text{ for any $n \in \bN$ .} $$

However, this framework excludes the study of collapsing sequences with bounded curvature (see \cite{CG1,CG2}). This happens for instance with the Berger metrics on the $3$-sphere, which are obtained from the round metric by shrinking the fibers of the Hopf fibration (see \cite[p. 252]{Bes}). In order to overcome this issue, Glickenstein \cite{Gl} and Lott \cite{Lo} extended the Cheeger-Gromov Compactness Theorem to those sequences of manifolds with no uniform positive lower bound on the injectivity radii. Their key tool is to replace the Riemannian manifolds with the larger category of {\it Riemannian groupoids} (see \cite[Sec 5]{Lo} and references therein). Remarkably, applying this machinery to sequences of globally homogeneous spaces, the limit Riemannian groupoids one gets are incomplete locally homogeneous spaces (see \cite[Ex 5.7, Prop 5.9]{Lo} and \cite[Sec 5]{BLS}). \smallskip

Motivated by \cite{BLS}, we give the following

\vskip 5pt

\noindent \textbf{Definition {\rm (Geometric model)}.} A {\it geometric model} is a smooth locally homogeneous Riemannian distance ball $(\eB, \hat{g})=(\eB_{\hat{g}}(o,\pi), \hat{g})$ of radius $\pi$ satisfying $|\sec(\hat{g})|\leq 1$ and $\inj_o(\eB, \hat{g})=\pi$. If $(M,g)$ is a locally homogeneous space and $(\eB, \hat{g})$ is a geometric model locally isometric to $(M,g)$, then $(\eB, \hat{g})$ is said to be a {\it geometric model of $(M,g)$}.

\vskip 5pt

The first main result of this paper is the following

\begin{thmint}[Existence and uniqueness] Any locally homogeneous space $(M,g)$ with $|\sec(g)|\leq1$ has a geometric model $(\eB,\hat{g})$, which is unique up to isometry. \label{MAIN-A} \end{thmint}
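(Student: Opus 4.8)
The plan is to realize the geometric model as a distance ball inside a suitable complete model attached to the infinitesimal data of $(M,g)$, and then to transport and intersect local isometries to get uniqueness. First I would recall the description of a locally homogeneous space by its \emph{infinitesimal model} (or Nomizu algebra / Singer's homogeneity data): fixing a point $x\in M$, the local isometry pseudogroup produces a transitive Lie algebra $\gg = \gh \oplus \gm$ acting on a neighborhood of $x$, with $\gh$ the isotropy, $\gm\cong T_xM$, together with the metric on $\gm$ and the curvature-type tensors encoding the Levi-Civita connection. By Nomizu's construction (the "development" of an infinitesimal model, cf.\ Tricerri-Vanhecke and the references in \cite{Kow,Mos}) this data integrates, on a simply connected neighborhood, to a germ of locally homogeneous metric; moreover the curvature bound $|\sec(g)|\le1$ is an inequality on the infinitesimal data and hence is inherited by the development. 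Using this together with standard comparison geometry ($|\sec|\le1$ gives a two-sided Jacobi field estimate), the exponential map $\exp_o$ from the developed germ is an immersion on the Euclidean ball of radius $\pi$, and pulling back $\hat g$ by $\exp_o$ on $B_{\mathrm{Eucl}}(0,\pi)\subset\gm$ yields a smooth Riemannian ball $(\eB,\hat g)=(\eB_{\hat g}(o,\pi),\hat g)$. One checks directly from the construction that $\hat g$ is locally homogeneous, is locally isometric to $(M,g)$ by the defining germ, still satisfies $|\sec(\hat g)|\le1$, and — because all points within distance $<\pi$ of $o$ are joined to $o$ by a unique minimizing geodesic lying in $\eB$ — that $\inj_o(\eB,\hat g)=\pi$. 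This produces the required geometric model.

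For uniqueness, suppose $(\eB_1,\hat g_1)$ and $(\eB_2,\hat g_2)$ are two geometric models of $(M,g)$. Being locally isometric to $(M,g)$, they are locally isometric to each other; pick points $o_1,o_2$ (the centers) and, after moving by a local isometry, a germ of isometry $\phi$ carrying $o_1$ to $o_2$. The key point is that a germ of isometry between locally homogeneous spaces with the \emph{same} infinitesimal model extends along paths: one shows, by the usual analytic-continuation argument for isometries (the isometry is determined by its $1$-jet at a point, and local homogeneity lets one spread it), that $\phi$ continues uniquely along every geodesic emanating from $o_1$ in $\eB_1$. Since $\exp_{o_1}$ is a diffeomorphism onto $\eB_1$ restricted to radial geodesics of length $<\pi$ and likewise for $\exp_{o_2}$ on $\eB_2$ (both injectivity radii equal $\pi$), radial continuation of $\phi$ composed with $\exp$ gives a well-defined isometry $\Phi:\eB_1\to\eB_2$ with $\Phi(o_1)=o_2$; the curvature bound guarantees there is no conjugate point inside radius $\pi$, so nothing obstructs the extension and $\Phi$ is a global isometry of the balls.

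The main obstacle, and the step needing the most care, is the uniqueness argument — specifically, showing that the germ of isometry $\phi$ at the center really does extend to a \emph{single-valued} isometry of the whole ball of radius exactly $\pi$. Two subtleties must be handled: (i) that analytic continuation of $\phi$ along two homotopic paths in $\eB_i$ gives the same result (so that $\Phi$ is well-defined), which follows because $\eB_i$ is a metric ball of radius $\pi$ and hence, being covered by radial geodesics without interior conjugate points, is simply connected — so the monodromy is trivial; and (ii) boundary behavior: the continuation must survive all the way out to radius $\pi$ (not just to some $\pi-\varepsilon$), which is where $\inj_o=\pi$ and $|\sec|\le1$ are used together — the first conjugate point along any geodesic from $o$ occurs at distance $\ge\pi$ by Rauch comparison, so $\exp_o$ stays an immersion on the open ball and the isometry extends continuously. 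Once these are in place, the interior regularity of Riemannian isometries (they are smooth, being distance-preserving bijections) finishes the argument, and existence plus uniqueness are established.
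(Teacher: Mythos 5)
Your uniqueness argument is essentially the one the paper uses: extending a germ of isometry between two geometric models along radial geodesics, using real analyticity of locally homogeneous spaces, $\inj_o=\pi$, and the absence of conjugate points before radius $\pi$, is exactly the content of Lemma \ref{uniqueisom} (itself a variant of \cite[Lemma 1.4]{BLS}). That half is fine.

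The existence half, however, has a genuine gap at its central step. Nomizu's construction (or the Singer/Tricerri--Vanhecke infinitesimal model) only produces a \emph{germ} of a locally homogeneous metric, i.e.\ a metric on some unspecified small neighborhood of $o$. When you write that ``the exponential map $\exp_o$ from the developed germ is an immersion on the Euclidean ball of radius $\pi$, and pulling back $\hat g$ by $\exp_o$ on $B(0,\pi)\subset\gm$ yields a smooth Riemannian ball'', you are implicitly assuming that $\exp_o$ --- equivalently, every radial geodesic from $o$ --- is \emph{defined} up to parameter $\pi$. Rauch comparison with $|\sec|\le 1$ guarantees that $\exp_o$ has no critical point before radius $\pi$ \emph{wherever it is defined}, but it says nothing about its domain: for an incomplete locally homogeneous space the geodesics may a priori fail to extend well before length $\pi$, and showing that the local model can be continued out to radius exactly $\pi$ is precisely the non-trivial content of the theorem. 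This is why the statement is immediate for globally homogeneous (hence complete) spaces (Remark \ref{remgmHstar}) but not in general, as the introduction of the paper stresses. The paper circumvents the difficulty by a different route: it approximates $(M,g)$ algebraically by globally homogeneous spaces $(M^{(n)},g^{(n)})$ with $|\sec(g^{(n)})|\le (1-\e^{(n)}/\pi)^{-2}$, pulls each $g^{(n)}$ back to the tangent ball of radius $\pi-\e^{(n)}$ (legitimate because these spaces are complete), and then invokes the compactness result (Corollary \ref{corcomp}) to extract a limit geometric model, which is identified with $(M,g)$ up to local isometry via the convergence of the curvature jets. To repair your argument you would need an explicit continuation lemma showing that the developed metric extends along every radial geodesic to length $\pi$; as written, that step is asserted rather than proved.
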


Notice that for a given globally homogeneous space $(M,g)$ with $|\sec(g)|\leq1$, the existence of a geometric model locally isometric to $(M,g)$ is a direct consequence of the Rauch Comparison Theorem, which allows to pull-back the metric $g$ to the tangent ball of radius $\pi$ at a point $p \in M$ (see Remark \ref{remgmHstar}). However, since locally homogeneous space are generally incomplete, the existence of geometric models predicted in Theorem \ref{MAIN-A} is definitely non trivial. \smallskip

The second main result of this paper is the following

\begin{thmint}[Compactness] The space of geometric models is compact in the pointed $\cC^{1,\a}$-topology. \label{MAIN-B} \end{thmint}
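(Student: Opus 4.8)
The plan is to run the Cheeger--Gromov precompactness machinery and then check that every defining property of a geometric model survives the passage to the limit, the one genuinely delicate point being local homogeneity. Fix a sequence $(\eB^{(n)},\hat g^{(n)},o^{(n)})$ of geometric models. The hypothesis $|\sec(\hat g^{(n)})|\leq1$ gives a uniform bound $|\Rm(\hat g^{(n)})|_{\hat g^{(n)}}\leq C$ with $C$ depending only on the dimension, while the normalization $\inj_{o^{(n)}}(\eB^{(n)},\hat g^{(n)})=\pi$ together with local homogeneity (and the extension of local isometries over balls whose radius is controlled by the injectivity radius, an ingredient already present in the proof of Theorem \ref{MAIN-A}) yields, for each $r<\pi$, a uniform lower bound $\inj_x(\eB^{(n)},\hat g^{(n)})\geq\iota(r)>0$ on $\eB_{\hat g^{(n)}}(o^{(n)},r)$. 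Applying the local form of the Cheeger--Gromov compactness theorem (for metrics on balls with bounded curvature and injectivity radius bounded below) on the exhausting sub-balls $\eB_{\hat g^{(n)}}(o^{(n)},r)$, $r\uparrow\pi$, and diagonalizing, we extract a subsequence converging in the pointed $\cC^{1,\a}$-topology to a pointed $\cC^{1,\a}$ Riemannian manifold $(\eB_\infty,\hat g_\infty,o_\infty)$; concretely there are open exhaustions $U_n\subset\eB_\infty$ and diffeomorphisms onto their images $F_n\colon U_n\to\eB^{(n)}$ with $F_n(o_\infty)=o^{(n)}$ and $F_n^*\hat g^{(n)}\to\hat g_\infty$ in $\cC^{1,\a}_{\mathrm{loc}}$.

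I would next verify that $(\eB_\infty,\hat g_\infty,o_\infty)$, except possibly for homogeneity, already has the structure of a geometric model. Since $\cC^{1,\a}$-convergence of the metrics forces locally uniform convergence of the induced distances, $\eB_\infty=\bigcup_{r<\pi}\eB_{\hat g_\infty}(o_\infty,r)=\eB_{\hat g_\infty}(o_\infty,\pi)$ is the distance ball of radius $\pi$ about $o_\infty$. The inequality $|\sec(\hat g_\infty)|\leq1$ passes to the limit because a two-sided Riemannian curvature bound $|\sec|\leq1$ is equivalent to the conjunction of the synthetic conditions ``locally $\mathrm{CAT}(1)$'' and ``Alexandrov curvature $\geq-1$'', both stable under pointed Gromov--Hausdorff convergence and a fortiori under $\cC^{1,\a}$-convergence. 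Finally $\inj_{o_\infty}=\pi$: the conjugate radius at $o_\infty$ is $\geq\pi$ by $|\sec(\hat g_\infty)|\leq1$ and Rauch comparison; a geodesic loop at $o_\infty$ of length $<2\pi$ would be approximated, by $\cC^1$-dependence of geodesics on the metric, by almost-closed geodesics in $\eB^{(n)}$ of length bounded away from $2\pi$ and hence would contradict $\inj_{o^{(n)}}=\pi$; and $\inj_{o_\infty}\leq\pi$ holds trivially since $\eB_\infty$ is a ball of radius $\pi$ about $o_\infty$.

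The heart of the matter is that $(\eB_\infty,\hat g_\infty)$ is locally homogeneous, and here I would transport local isometries to the limit. Let $p\in\eB_\infty$ and choose $r$ with $d_{\hat g_\infty}(o_\infty,p)<r<\pi$; put $p_n:=F_n(p)$, so $d_{\hat g^{(n)}}(o^{(n)},p_n)\to d_{\hat g_\infty}(o_\infty,p)<r$. Transitivity of the local-isometry pseudogroup of $(\eB^{(n)},\hat g^{(n)})$ provides a local isometry sending $o^{(n)}$ to $p_n$; using $\inj_{o^{(n)}}=\pi$ and the fact that $\exp_{p_n}$ is defined at least on the ball of radius $\pi-d_{\hat g^{(n)}}(o^{(n)},p_n)$ in $T_{p_n}\eB^{(n)}$, this local isometry extends (as a local isometry) to a map $\phi_n$ whose domain contains $\eB_{\hat g^{(n)}}(o^{(n)},\pi-r)$, a ball of radius independent of $n$. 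Conjugating, $\tilde\phi_n:=F_n^{-1}\circ\phi_n\circ F_n$ are local isometries of the metrics $F_n^*\hat g^{(n)}$ defined on a fixed ball about $o_\infty$; being isometries between metrics of uniformly bounded geometry they have uniformly bounded $\cC^{1,\a}$-norms, so Arzel\`a--Ascoli (the relevant balls of $\eB_\infty$ being relatively compact) extracts a subsequential limit $\phi_{\infty,p}$, and letting $n\to\infty$ in $\tilde\phi_n^*\bigl(F_n^*\hat g^{(n)}\bigr)=F_n^*\hat g^{(n)}$ shows $\phi_{\infty,p}$ is a local isometry of $\hat g_\infty$ with $\phi_{\infty,p}(o_\infty)=p$. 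As $p\in\eB_\infty$ was arbitrary, the local-isometry pseudogroup of $(\eB_\infty,\hat g_\infty)$ is transitive, i.e.\ $(\eB_\infty,\hat g_\infty)$ is locally homogeneous; and a locally homogeneous $\cC^{1,\a}$ metric is automatically smooth (in fact real-analytic) by the regularity theory for isometries (Myers--Steenrod, Calabi--Hartman) bootstrapped against transitivity. Hence $(\eB_\infty,\hat g_\infty)$ is a geometric model, the space of geometric models is sequentially compact, and since the pointed $\cC^{1,\a}$-topology is metrizable it is compact.

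The step I expect to be the main obstacle is precisely the local-homogeneity argument. The $\cC^{1,\a}$-limit retains no bound on any covariant derivative of the curvature, so one cannot argue through finitely-determined infinitesimal models or homogeneous structures; instead one must control, uniformly in $n$ \emph{and} in the target point, the domains of the local isometries used above, and this is exactly where the normalization $\inj_{o^{(n)}}=\pi$ and the extendability of local isometries over balls of radius up to the injectivity radius are essential. By comparison, the verification that the limit is genuinely the distance ball of radius $\pi$ with injectivity radius $\pi$ and the regularity bootstrap from $\cC^{1,\a}$ to smooth are routine.
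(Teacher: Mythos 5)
Your skeleton matches the paper's: a Cheeger--Gromov-type precompactness argument for the incomplete balls, verification that the limit is a distance ball of radius $\pi$ with $\inj_{o_\infty}=\pi$ and curvature bounds, and transitivity of the limit pseudogroup obtained by extending local isometries over uniformly large balls and passing to the limit by Arzel\`a--Ascoli. You also correctly single out local homogeneity of the limit as the crux. The genuine gap is the last clause of that step: \emph{``a locally homogeneous $\cC^{1,\a}$ metric is automatically smooth \ldots\ by Myers--Steenrod, Calabi--Hartman bootstrapped against transitivity.''} This is not a theorem in the form you use it, and it is exactly the point the paper is organized around (compare the introduction's discussion of why \cite{BLS} needed $\Ric\rar 0$ to get regularity of the limit). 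Calabi--Hartman gives $\cC^{2,\a}$ regularity of each individual isometry of a $\cC^{1,\a}$ metric, but that does not improve the regularity of the metric itself, and transitivity of the \emph{pseudogroup} of local isometries is not known to force smoothness. What the paper actually invokes, Theorem \ref{lochomreg} from \cite{Ped2}, requires a \emph{locally compact, effective local topological group} of isometries acting transitively --- a strictly stronger structure than a transitive pseudogroup, needed to run the Hilbert-fifth-problem/Montgomery--Zippin machinery. Constructing that object on the limit is the bulk of the paper's proof: one first builds the limit exponential maps $E^{(\infty)}_p$ as uniform limits of the $E^{(n)}_p$ (Proposition \ref{proplimitExp}, via Rauch--Jacobi estimates giving uniform bi-Lipschitz bounds), then writes every limit isometry as $E^{(\infty)}_p\circ a\circ L^{(\infty)}_o$, embeds the candidate group into the orthonormal frame bundle by $f\mapsto(f(o),df|_o)$, and takes the topological closure there to obtain local compactness, with multiplication and inversion expressed through $E^{(\infty)}$ and $L^{(\infty)}$ so that they remain continuous on the closure. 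Your pointwise Arzel\`a--Ascoli limits produce the elements of this group but none of this structure, so the regularity step is unsupported as written.

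A secondary, repairable point: you assert $|\sec(\hat g_\infty)|\leq1$ before establishing smoothness, but sectional curvature is not defined for a $\cC^{1,\a}$ metric. The synthetic bounds you invoke do pass to the limit, but they can only be converted into $|\sec(\hat g_\infty)|\leq1$ after the regularity step; this is the order the paper follows, and for the upper bound it also needs a uniform lower bound on the convexity radius along the sequence (from \cite{Pet}) before applying \cite{BS}.
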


We would like to point out that our Theorem \ref{MAIN-B} was already known in a very special case. More precisely, B\"ohm, Lafuente and Simon proved the following statement in \cite[Thm 1.6]{BLS}: {\it a sequence of geometric mo\-dels $\big(\eB^{(n)}, \hat{g}^{(n)}\big)$ with $\Ric(\hat{g}^{(n)}) \rar 0$ converges, up to a subsequence, to a smooth flat Riemannian manifold $\big(M^{(\infty)},g^{(\infty)}\big)$ in the pointed $\cC^{1,\a}$-topology}. Indeed, both theorems rely on a Cheeger-Gromov-type precompactness theorem for incomplete Riemannian manifolds and in both proofs there is the need of showing that the limit space, which is a priori just a $\cC^{1,\a}$-Riemannian manifold, is indeed smooth. In \cite{BLS} such required regularity is achieved by means of the additional assumption on the Ricci tensor, while in our result we exploit a Lie-theoretical argument based on a local version of the Myers-Steenrod Theorem proved in \cite{Ped2}. \smallskip

Notice also that Theorem \ref{MAIN-B} is not true anymore if one replaces the $\cC^{1,\a}$-topology with the $\cC^{\infty}$-topology. In fact, there exist sequences of geometric models $\big(\eB^{(n)}, \hat{g}^{(n)}\big)$ which converges to some limit geometric model $\big(\eB^{(\infty)}, \hat{g}^{(\infty)}\big)$ in the pointed $\cC^{1,\a}$-topology but for which the corresponding sequence of covariant derivatives ${\n}^{\hat{g}^{(n)}}\Rm(\hat{g}^{(n)}) \big|_{o^{(n)}}$ blows up. This phenomenon is somehow unexpected to us and it is related to the existence of diverging sequences of invariant metrics with bounded curvature in the sense of \cite{Ped1}. We discuss it in detail in \cite{Ped3}. \smallskip

Together with Theorem \ref{MAIN-A}, this provides the compactness of the moduli space of locally homogeneous spaces with $|\sec|\leq1$ up to local isometry. Therefore, Theorem \ref{MAIN-A} and Theorem \ref{MAIN-B} show that the geometric models provide the right theoretical framework to study convergence of globally and locally homogeneous spaces from the geometric viewpoint, which is the topic of \cite{Ped3}. \smallskip

In order to use the compactness result of Theorem \ref{MAIN-B} in the proof of Theorem \ref{MAIN-A}, one needs to take care of some special issues on the convergence of locally homogeneous spaces. In particular, we use the fact that {\it the homogenous spaces are dense in the class of locally homogenous spaces with respect to the topology of algebraic convergence}. Here, with ``algebraic convergence'' we mean the convergence of the corresponding full Lie algebra of Killing vector fields at some distinguished points. This is a key feature on equivariant convergence of globally and locally homogeneous spaces: see \cite{Lau1,Lau2,Lau3} and \cite{BL}. As a by-product of our analysis, we obtain the following

\begin{propint} Let $(\eB^{(n)},\hat{g}^{(n)})$ be a sequence of geometric models converging algebraically to a locally homogeneous limit space $(M^{(\infty)},g^{(\infty)})$. Then, $\big|\sec(g^{(\infty)})\big|\leq1$ and $(\eB^{(n)},\hat{g}^{(n)})$ converges to the geometric model $(\eB^{(\infty)},\hat{g}^{(\infty)})$ of $(M^{(\infty)},g^{(\infty)})$ in the pointed $\cC^{\infty}$-topology. \label{MAIN-C} \end{propint}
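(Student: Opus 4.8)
The plan is to recover both assertions from two facts: \emph{(i)} on a locally homogeneous space the curvature tensor and all its covariant derivatives at a point are \emph{algebraic} quantities, i.e. universal polynomial expressions in the structure constants of the Killing algebra and in the metric inner product; and \emph{(ii)} a Cheeger--Gromov-type subconvergence for incomplete manifolds with uniformly bounded geometry and a uniform lower bound on the injectivity radius at the base point, whose limit is then recognized by means of the local Myers--Steenrod theorem of \cite{Ped2}.

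\emph{Step 1 (the curvature estimate and uniform bounds).} I would fix base points $o^{(n)}\in\eB^{(n)}$ and let $\gg^{(n)}$ be the full Killing algebra of $\hat g^{(n)}$ at $o^{(n)}$, with isotropy $\gk^{(n)}$ and the induced inner product on $\gg^{(n)}/\gk^{(n)}\cong T_{o^{(n)}}\eB^{(n)}$; by the assumption of algebraic convergence these data converge to the corresponding data of $(M^{(\infty)},g^{(\infty)})$ at a base point $o^{(\infty)}$. The isotropy of a geometric model at its centre is a closed subgroup of $\O(m)$, hence compact, so after adapting bases one may write $\gg^{(n)}=\gk^{(n)}\oplus\gm^{(n)}$ reductively with $\gm^{(n)}$ converging. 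For every $k\geq0$ the tensor $\big(\n^{\hat g^{(n)}}\big)^{k}\Rm(\hat g^{(n)})\big|_{o^{(n)}}$ is a universal polynomial in the structure constants and the inner product, hence it converges as $n\to\infty$; in particular $\big|\big(\n^{\hat g^{(n)}}\big)^{k}\Rm(\hat g^{(n)})\big|_{o^{(n)}}\leq C_k$ with $C_k$ independent of $n$, and by local homogeneity these bounds propagate to all of $\eB^{(n)}$. Taking $k=0$: the sectional curvature of $\hat g^{(n)}$ at $o^{(n)}$, viewed as a function on the Grassmannian of $2$-planes, converges to that of $g^{(\infty)}$ at $o^{(\infty)}$; since $|\sec(\hat g^{(n)})|\leq1$ we obtain $|\sec(g^{(\infty)})|\leq1$ at $o^{(\infty)}$, hence everywhere by local homogeneity of $M^{(\infty)}$. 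This gives the first assertion.

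\emph{Step 2 (smooth subconvergence and identification of the limit).} By Step 1 the pointed incomplete manifolds $(\eB^{(n)},\hat g^{(n)},o^{(n)})$ satisfy $\sup_{\eB^{(n)}}\big|\big(\n^{\hat g^{(n)}}\big)^{k}\Rm\big|\leq C_k$ for all $k$ and $\inj_{o^{(n)}}=\pi$. Feeding these uniform higher-order bounds into the Cheeger--Gromov-type compactness for incomplete manifolds underlying Theorem \ref{MAIN-B} upgrades its $\cC^{1,\a}$-convergence to $\cC^{\infty}$: along a subsequence there are a pointed smooth manifold $(N,h,o)$, an exhaustion $\{U_j\}$ of $N$ and smooth pointed embeddings $\phi_n\colon U_j\hookrightarrow\eB^{(n)}$ with $\phi_n^*\hat g^{(n)}\to h$ in $\cC^{\infty}_{\mathrm{loc}}$. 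Pulling back a fixed basis of $\gg^{(n)}$ and using that Killing fields of a locally homogeneous space solve a linear second-order ODE system with coefficients controlled by the (uniformly bounded) curvature, these vector fields subconverge on compact sets to vector fields on $N$ which are a priori only $\cC^{1,\a}$-Killing fields of $h$; by \cite{Ped2} they are genuine smooth Killing fields, they span a Lie algebra isomorphic to $\gg^{(\infty)}$ with the same isotropy and inner product at $o$, and act transitively near $o$. Hence $(N,h)$ is locally homogeneous with Killing algebra $\gg^{(\infty)}$, so it is locally isometric to $(M^{(\infty)},g^{(\infty)})$; moreover $\inj_o(N,h)=\lim_n\inj_{o^{(n)}}=\pi$ and $N=\eB_h(o,\pi)$, so $(N,h)$ is a geometric model of $(M^{(\infty)},g^{(\infty)})$. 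Since any two geometric models of the same locally homogeneous space are isometric (the uniqueness part of Theorem \ref{MAIN-A}), the limit is independent of the subsequence, so it is $(\eB^{(\infty)},\hat g^{(\infty)})$ and the whole sequence $(\eB^{(n)},\hat g^{(n)})$ converges to it in the pointed $\cC^{\infty}$-topology.

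\emph{The main obstacle} I expect is the identification step: controlling the Killing fields of the \emph{incomplete} balls $\eB^{(n)}$ under convergence — that they persist to the limit on a region large enough to witness transitivity, that their limits realize exactly the algebraic limit $\gg^{(\infty)}$ rather than a degeneration of it, and that the regularity upgrade of \cite{Ped2} genuinely applies to the a priori only $\cC^{1,\a}$ limit metric. A secondary subtlety is ensuring that the radius-$\pi$ ball structure is not lost at the "missing boundary", i.e. that $(N,h)$ is again a smooth Riemannian distance ball of radius $\pi$ with $\inj_o=\pi$; this is where the precise form of the incomplete Cheeger--Gromov statement, and the stability of the injectivity radius under $\cC^{\infty}$-convergence, must be used carefully.
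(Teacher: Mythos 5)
Your overall architecture is the one the paper uses. Your Step 1 is exactly the paper's Fact \ref{fact1}: under algebraic convergence the full curvature jet $\big(\n^{\hat g^{(n)}}\big)^{k}\Rm(\hat g^{(n)})\big|_{o^{(n)}}$ converges (being a universal expression in the structure constants and the inner product of an adapted frame), the resulting bounds propagate over $\eB^{(n)}$ by local homogeneity, and $|\sec(g^{(\infty)})|\leq1$ follows. Feeding the uniform higher-order bounds into the upgraded compactness statement is the paper's Corollary \ref{corcomp}, and your final subsequence principle (uniqueness of geometric models from Theorem \ref{MAIN-A} forces every convergent subsequence to the same limit, hence the whole sequence converges) is also the paper's closing step.

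Where you diverge --- and where your proposal has a genuine gap, which you yourself flag as ``the main obstacle'' --- is the identification of the subsequential limit with the geometric model of $(M^{(\infty)},g^{(\infty)})$. You propose to pull back bases of the Killing algebras, pass to the limit, and invoke the local Myers--Steenrod theorem of \cite{Ped2} to conclude that the limiting Killing fields realize $\gg^{(\infty)}$. As you note, it is not established that these limits survive on a region large enough to witness transitivity, nor that they span the algebraic limit rather than a degeneration of it; your sketch does not close this. The paper sidesteps the issue entirely: by Corollary \ref{corcomp} the subsequential limit is already a smooth (hence real analytic) geometric model and the convergence is $\cC^{\infty}$, so the curvature jet at its center is the limit of the jets at $o^{(n)}$, which by Fact \ref{fact1} equals the jet of $g^{(\infty)}$ at its base point; since for real analytic Riemannian manifolds the $\infty$-jet of the curvature at a point is a complete local-isometry invariant (\cite[Cor E.III.8]{BGM}), the limit is locally isometric to $(M^{(\infty)},g^{(\infty)})$ and hence is its (unique) geometric model. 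Your own Step 1 already supplies exactly this jet convergence, so the gap closes immediately by replacing the Killing-field argument with this observation; note also that the regularity and ball-structure worries you raise at the end are already absorbed into Theorem \ref{thmcomp}, so the limit metric is smooth from the outset rather than merely $\cC^{1,\a}$.
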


Here, the role of the geometric models is crucial. In fact, Lauret exhibited an explicit sequence of Aloff--Wallach spaces $\big(W_{n,n{+}1},g^{(n)}\big)$ converging algebraically to a limit Aloff--Wallach space $\big(W_{1,1},g^{(\infty)}\big)$ (see \cite[Ex 6.6]{Lau1}). Since $W_{1,1}$ is compact and $W_{n,n{+}1}$ are pairwise non-homeomorphic, it follows that there is no subsequence of $\big(W_{n,n{+}1},g^{(n)}\big)$ converging to $\big(W_{1,1},g^{(\infty)}\big)$ in the pointed $\cC^{\infty}$-topology. Notice that this implies, in particular, that the injectivity radii along the sequence must tend to zero, otherwise one would have sub-convergence by the Cheeger-Gromov Compactness Theorem. However, up to a rescaling we can assume that $|\sec(g^{(n)})|\leq1$ and, by Proposition \ref{MAIN-C}, the geometric models of $\big(W_{n,n{+}1},g^{(n)}\big)$ converge to the geometric model of $\big(W_{1,1},g^{(\infty)}\big)$ in the pointed $\cC^{\infty}$-topology.

We also stress that the converse of Proposition \ref{MAIN-C} is wrong, i.e. one can build sequences of geometric models converging in the pointed $\cC^{\infty}$-topology which do not admit any algebraically convergent subsequence. This phenomenon is called {\it algebraic collapse} and it happens, for example, by considering sequences of left-invariant Ricci flow blow-downs on the universal cover of $\fSL(2,\bR)$ (see \cite[Ex 9.1]{BL}). \smallskip

The paper is structured as follows. In Section \ref{prel} we collect some preliminaries on locally homogeneous spaces, on local groups of isometries and on convergence of Riemannian manifolds. In Section \ref{spaceGM} we prove Theorem \ref{MAIN-B}. In Section \ref{exsec} we briefly introduce the notion of algebraic convergence and we prove Theorem \ref{MAIN-A} and Proposition \ref{MAIN-C}. \medskip

\noindent{\it Acknowledgement.} This work has been set up during the visit of the author at WWU M\"unster. We are grateful to Christoph B\"ohm for his hospitality and for many fundamental discussions about the contents of this paper. We also thank Ramiro Lafuente, Andrea Spiro and Luigi Verdiani for helpful comments and pleasant conversations. Finally, we would like to thank the anonymous referees for their careful reading of the manuscript.

\vspace{15pt}

\section{Preliminaries and notation} \label{prel} \setcounter{equation} 0

\subsection{Notation} \label{notation} \hfill \par

We indicate with $\la \cdot, \cdot \ra_{\st}$ the standard Euclidean metric on $\bR^m$ and with $|\,\cdot\,|_{\st}$ the induced norm. Given a ball $B \subset\subset \bR^m$ and a pair $(k,\a) \in \big(\bZ_{\geq0} \times [0,1]\big)$, we consider the Banach spaces $\cC^{k,\a}(\bar{B})$ and $\cC^{k,0}(\bar{B})\=\cC^{k}(\bar{B})$ defined following \cite[p. 52]{GT} with the usual norm $|\!| \cdot |\!|_{\cC^{k,\a}(\bar{B})}$. A function $f: U \subset \bR^m \rar \bR$ is said to be {\it of class $\cC^{k,\a}$} if $f|_B \in \cC^{k,\a}(\bar{B})$ for any ball $B \subset\subset U$. In what follows, {\it smooth} will always be a synonym for {\it of class $\cC^{\infty}$}. A sequence $f^{(n)}: U^{(n)} \subset \bR^m \rar \bR$ of functions of class $\cC^{k,\a}$ {\it converges in the $\cC^{k,\a}$-topology} to a function $f^{(\infty)}: U^{(\infty)} \subset \bR^m \rar \bR$ of class $\cC^{k,\a}$ if $\lim_{n\rightarrow +\infty} U^{(n)}=U^{(\infty)}$ and for any ball $B \subset\subset U^{(\infty)}$ it holds \beq \big|\!\big|f^{(n)}-f^{(\infty)}\big|\!\big|_{\cC^{k,\a}(\bar{B})} \rar 0 \quad \text{ as $n \rar +\infty$ } \,\, . \label{convCka} \eeq If in addition $f^{(n)},f^{(\infty)}$ are of class $\cC^{\infty}$ and \eqref{convCka} holds for any $(k,0) \in \bZ_{\geq0}\times\{0\}$, we say that $f^{(n)}$ converges in the $\cC^{\infty}$-topology to $f^{(\infty)}$. Here, we recall that by definition $\lim_{n\rightarrow +\infty} U^{(n)}=U^{(\infty)}$ if both the sets $$\liminf_{n\rightarrow +\infty} U^{(n)} \= \bigcup_{n \in \bN} \bigcap_{j \geq n} U^{(j)} \,\, , \quad \limsup_{n\rightarrow +\infty} U^{(n)} \=  \bigcap_{n \in \bN} \bigcup_{j \geq n} U^{(j)} $$ coincide with $U^{(\infty)}$. \smallskip

In dealing with differentiable manifolds, by \cite[Thm. 2.9]{Hi} it is sufficient to consider only those whose local coordinates overlap smoothly, i.e. smooth manifolds. One can easily work with local coordinates to define functions of class $\cC^{k,\a}$ between smooth manifolds as well as tensor fields of class $\cC^{k,\a}$ on smooth manifolds. A {\it $\cC^{k,\a}$-Riemannian manifold $(M,g)$} is the datum of a smooth manifold $M$ together with a Riemannian metric $g$ on $M$ of class $\cC^{k,\a}$. We recall that any $\cC^{k,\a}$-Riemannian manifold $(M,g)$ is in particular a separable, locally compact length space by means of the Riemannian distance $\td_g$ induced by $g$ on $M$ (see e.g. \cite{Bu}). We will denote by $\eB_g(x,r)$ the metric ball centered at $x \in M$ of radius $r>0$ inside $(M,\td_g)$. If $k+\a>0$, then a map $f:M \rar M$ is an isometry of the metric space $(M,\td_g)$ if and only if it is a $\cC^{k+1,\a}$-diffeomorphism which preserves the metric tensor $g$ (see \cite{CH, Re, Sab, Sh}). \smallskip

Given a sufficiently regular $\cC^{k,\a}$-Riemannian manifold $(M,g)$, we denote by $\n^g$ its Levi-Civita covariant derivative and by $\Rm(g)(X{\wedge}Y) \= \n^g_{[X,Y]}-[\n^g_X,\n^g_Y]$ its Riemannian curvature operator. Moreover, we denote the sectional curvature by $\sec(g)$, the Riemannian exponential map by $\Exp(g)$, the injectivity radius at a distinguished point $x \in M$ by $\inj_x(M,g)$.

\subsection{Pointed convergence} \hfill \par

In this section, we give the notion of pointed convergence for pointed Riemannian manifolds. This definition is usually stated for complete Riemannian manifolds (see e.g. \cite[Ch 3]{Cho}), but here we present a version which holds also in the non-complete setting. \smallskip

Let $M$ be a smooth manifold. We recall that a sequence $(T^{(n)})$ of tensor fields on $M$ of class $\cC^{k,\a}$ {\it converges in the $\cC^{k,\a}$-topology} to a tensor field $T$ on $M$ of class $\cC^{k,\a}$ of the same type, if for any local chart $(U,\xi)$ on $M$, the components of $(\xi^{-1})^*T^{(n)}$ converge to the components of $(\xi^{-1})^*T$ in the $\cC^{k,\a}$-topology. \smallskip

Let now $(M,g)$ be a $\cC^{k,\a}$-Riemannian manifold. The {\it distance of a point $p \in M$ from the boundary of $(M,g)$} is defined as the supremum 
$$\tdist(p,\p(M,g)) \= \sup\{r\geq0 : \eB_g(p,r) \subset\subset M \} \,\, .$$ Notice that by the Hopf-Rinow-Cohn-Vossen Theorem the manifold $(M,g)$ is complete if and only if $\tdist(x,\p(M,g))=+\infty$ for some, and hence for any, $x \in M$ (see e.g. \cite[Thm 2.5.28]{BBI}).

\begin{definition} Let $(M^{(n)},g^{(n)},p^{(n)})$ be a sequence of pointed $\cC^{k,\a}$-Riemannian $m$-manifolds, let $\d^{(n)}\=\tdist(p^{(n)},\p(M^{(n)},g^{(n)}))$ and assume that $\d^{(n)} \rar \d^{(\infty)} \in (0,+\infty]$ . The sequence $(M^{(n)},g^{(n)},p^{(n)})$ is said to {\it converge in the pointed $\cC^{k,\a}$-topology} to a pointed $\cC^{k,\a}$-Riemannian manifold $(M^{(\infty)},g^{(\infty)},p^{(\infty)})$ if there exist: \begin{itemize}
\item[i)] an exhaustion $(U^{(n)})$ of $M^{(\infty)}$ by relatively compact open sets centered at $p^{(\infty)}$,
\item[ii)] a sequence $(\tilde{\d}^{(n)}) \subset \bR$ such that $0<\tilde{\d}^{(n)} \leq \d^{(n)}$ and $\tilde{\d}^{(n)} \rar \d^{(\infty)}$;
\item[iii)] a sequence of $\cC^{k+1,\a}$-embeddings $\phi^{(n)}: U^{(n)} \rar M^{(n)}$ such that $\phi^{(n)}(p^{(\infty)})=p^{(n)}$;
\end{itemize} in such a way that the following conditions are satisfied: \begin{itemize}
\item[$\bcdot$] $\eB_{g^{(n)}}(p^{(n)},\tilde{\d}^{(n)}) \subset \phi^{(n)}(U^{(n)}) \subset \eB_{g^{(n)}}(p^{(n)},\d^{(n)})$;
\item[$\bcdot$] $\phi^{(n)*}g^{(n)}$ converges to $g^{(\infty)}$ in the $\cC^{k,\a}$-topology. 
\end{itemize} \label{defpointed} \end{definition}

Notice that this type of convergence implies that for any $0<r<\d^{(\infty)}$ there exists an integer $\bar{n}=\bar{n}(r) \in \bN$ such that the Riemannian distance ball $\eB_{g^{(n)}}(p^{(n)},r)$ is compactly contained in $M^{(n)}$ for any $n \geq \bar{n}$ and the sequence of compact balls $\big(\overline{\eB_{g^{(n)}}(p^{(n)},r)},\td_{g^{(n)}}\big)_{n \geq \bar{n}}$ converges in the Gromov-Hausdorff topology to the compact ball $\big(\overline{\eB_{g^{(\infty)}}(p^{(\infty)},r)},\td_{g^{(\infty)}}\big)$ (see \cite[p. 415]{Pet} and \cite[Ex 8.1.3]{BBI}). Moreover: \begin{itemize}
\item[$\bcdot$] if $\d^{(\infty)}$ is finite, then the limit space $(M^{(\infty)},g^{(\infty)},p^{(\infty)})$ is an incomplete Riemannian distance ball of radius $\d^{(\infty)}$ centered at $p^{(\infty)}$ and also $\tdist(p^{(\infty)},\p(M^{(\infty)},g^{(\infty)}))=\d^{(\infty)}$;
\item[$\bcdot$] if $\d^{(\infty)}=+\infty$, then the limit space $(M^{(\infty)},g^{(\infty)},p^{(\infty)})$ is complete.
\end{itemize} Notice also that if $\d^{(n)}=+\infty$ for any $n \in \bN$, then we get back to the usual definition of pointed convergence for complete Riemannian manifolds (see e.g. \cite[p. 415]{Pet}). \smallskip

We mention here that the Cheeger-Gromov Precompactness Theorem states that given two constants $D_{\zero},v_{\zero}>0$, the space of smooth compact Riemannian $m$-manifolds with bounded curvature, diameter at most $D_{\zero}$ and volume at least $v_{\zero}$ is precompact in the pointed $\cC^{1,\a}$-topology. Various versions of this classical result are known, e.g. for complete non-compact but pointed Riemannian manifolds or for bounded domains in possibly incomplete pointed Riemannian manifolds. We refer to the survey paper \cite{HH} and references therein for more details. \smallskip

Finally, we recall that a central role in convergence theory of Riemannian manifolds is played by the notion of {\it harmonic radius}, that is

\begin{definition}[\cite{And}, Sec 2] Let $(M^m,g)$ be a smooth Riemannian manifold, $p \in M$. The {\it $\cC^{k,a}$-harmonic radius of $(M^m,g)$ at $p$}, which we denote by ${\rm har}^{k,\a}_p(M,g)$, is the largest $r \geq 0$ such that there exists a local chart $\xi=(\xi^1,{\dots},\xi^m) : \eB_g\big(p,\sqrt{2}r\big) \rar \bR^m$ such that \begin{itemize}
\item[i)] $\xi(p)=0$ and $((\xi^{-1})^*g)_{ij}(0)=\d_{ij}$ for any $1 \leq i,j \leq m$;
\item[ii)] $\D_g \xi^i=0$ for any $1 \leq i \leq m$;
\item[iii)] $\frac12|v|_{\st}^2 \leq ((\xi^{-1})^*g)_{ij}(x) v^iv^j \leq 2|v|_{\st}^2$ for any $v \in \bR^m$ and $x \in \xi\big(\eB_g\big(p,\sqrt{2}r\big)\big)$;
\item[iv)] $r^{k+\a}\,{\cdot}\,|\!|((\xi^{-1})^*g)_{ij}|\!|_{\cC^{k,\a}(\ol{B_{\st(0,r)}})} \leq 1$ for any $1 \leq i,j \leq m$.
\end{itemize} \label{harmcoord} \end{definition}

In the definition above, $\D_g$ is the Laplace-Beltrami operator of $(M,g)$. Notice that by (i) and (iii) it follows that $B_{\st}(0,r) \subset \xi\big(\eB_g\big(p,\sqrt{2}r\big)\big) \subset B_{\st}(0,2r)$. Moreover, by (ii), (iv) and the classical Schauder interior estimates (see \cite[Thm 6.2 and (4.17)]{GT}), there exists a constant $C=C(m,k,\a,r)>0$ such that $$|\!|f \circ \xi^{-1}|\!|_{\cC^{k+1,\a}(\ol{B_{\st}(0,\frac{r}2)})} \,\leq\, C \!\sup_{\xi^{-1}(B_{\st}(0,r))}\! \big|f\big|$$ for any $\D_g$-harmonic function $f: \xi^{-1}(B_{\st}(0,r)) \subset M \rar \bR$ (see also \cite[Sec 1]{Ka}). \smallskip

\subsection{Local groups of isometries} \label{loctrg} \hfill \par

We collect here some definitions concerning local transformation groups on manifolds. We refer to \cite[Sec 3]{Ped2} for more details.

A {\it local topological group} is a tuple $\fG=(\fG, e, \eD(\fG), \jmath, \nu)$ formed by a Hausdorff topological space $\fG$ with a distinguished element $e \in \fG$ called {\it unit}, an open subset $\eD(\fG) \subset \fG\times\fG$ which contains both $\fG\times\{e\},\{e\}\times \fG$ and two continuous maps $\jmath: \fG \rar \fG$, $\nu: \eD(\fG) \rar \fG$ such that, setting $a_1\cdot a_2 \= \nu(a_1,a_2)$ and $a^{-1}\=\jmath(a)$, the following conditions hold: \begin{itemize}
\item[$\bcdot$] $a \cdot e = e \cdot a = a$,
\item[$\bcdot$] $a_1 \cdot (a \cdot a_2)=(a_1 \cdot a) \cdot a_2$ provided that both side of the equation are well defined,
\item[$\bcdot$] $(a,a^{-1}),(a^{-1},a) \in \eD(\fG)$ and $a \cdot a^{-1} = a^{-1} \cdot a =e$. \end{itemize} Let now $(M,g,p)$ be a pointed $\cC^{k,\a}$-Riemannian manifold, with $k+\a>0$. A {\it local topological group of isometries on $(M,g,p)$} is a tuple $G=(\fG,\eU_{\fG},\W_p,\eW,\Q)$ formed by \begin{itemize}
\item[i)] a local topological group $\fG$ and a neighborhood $\eU_{\fG} \subset \fG$ of the unit $e \in \fG$;
\item[ii)] a neighborhood $\W_p \subset M$ of $p$;
\item[iii)] an open set $\eW \subset \eU_{\fG} \times \W_p$ such that $\eU_{\fG}\times\{p\}$, $\{e\} \times \W_p \subset \eW$ and a continuous application $\Q: \eW \rar \W_p$;
\end{itemize} such that the following hold: \begin{itemize}
\item[$\bcdot$] for any $a \in \eU_{\fG}$, the map $\Q(a) \= \Q(a,\cdot)$ is a local isometry defined in a neighborhood of $p$;
\item[$\bcdot$] for any $(a,b)\in (\eU_{\fG}\times\eU_{\fG}) \cap \eD(\fG)$ and $x \in \W_p$ it holds $\big(\Q(a)\circ\Q(b)\big)(x) =\Q(a\cdot b)(x)$, provided that both side of the equation are well defined;
\item[$\bcdot$] $\Q(e,x)=x$ for any $x \in \W_p$.
\end{itemize} For the sake of shortness, we often identify elements $a \in \eU_{\fG} \subset \fG$ with the corresponding local isometries $f\=\Q(a)$ and we just write ``$f \in G$ ''. The local topological group of isometries $G$ is said to be {\it effective} if $\big\{f \in G : \text{$f$ fixes a neighborhood of $p$}\big\}=\{\Id\}$ and {\it transitive} if the {\it orbit of $G$ through $p$} \begin{multline*} G(p) \= \Big\{\big(f_1 \circ {\dots} \circ f_N \big)(p) : N\geq1 \, , \,\, f_i \in G \, \text{ for any $1\leq i \leq N$} \, , \\ \big(f_{j+1} \circ{\dots}\circ f_N\big)(p) \text{ is well defined for any $1\leq j \leq N-1$} \Big\} \end{multline*} contains a neighborhood of the point $p$.

Finally, if the metric $g$ is smooth, a local topological group of isometries $G=(\fG,\eU_{\fG},\W_p,\eW,\Q)$ acting on $(M,g,p)$ is called {\it local Lie group of isometries} if $\fG$ is a Lie group and the map $\Q$ is of class $\cC^{\infty}$. In this case, the {\it infinitesimal generators} of $G$ are the Killing vector fields defined on $\W_p$ obtained by differentiating the action $\Q$.

\subsection{Locally homogeneous Riemannian spaces} \label{Nomizualg} \hfill \par

A $\cC^{k,\a}$-Riemannian manifold $(M,g)$ is {\it locally homogeneous} if the pseudogroup of local isometries of $(M,g)$ acts transitively on $M$, i.e. if for any $x,y \in M$ there exist two open sets $U_x , U_y \subset M$ and a local isometry $f : U_x \rar U_y$ such that $x \in U_x$, $y \in U_y$ and $f(x) = y$. From now on, we use the nomenclature {\it locally homogeneous Riemannian space} to denote locally homogeneous Riemannian manifolds which are smooth, and hence real analytic (see e.g. \cite[Thm 2.2]{Sp1} or \cite[Lemma 1.1]{BLS}). \smallskip

Given a locally homogeneous space $(M,g)$ and a distinguished point $p \in M$, it is known that there exists a local Lie group of isometries which acts transitively on $(M,g,p)$ according to the definitions in Section \ref{loctrg}. Such local Lie group is constructed as follows. Consider the {\it Killing generators at $p$}, that is the pair $(v,A) \in T_pM \oplus \so(T_pM,g_p)$ such that \beq A \cdot g_p = 0 \,\, , \quad v \,\lrcorner\, \big((\n^g)^{k+1}\Rm(g)_p\big) + A \cdot \big((\n^g)^k\Rm(g)_p\big) = 0 \quad \text{ for any $k \in \bZ_{\geq0}$} \, , \label{killgen} \eeq where $\so(T_pM,g_p)$ acts on the tensor algebra of $T_pM$ as a derivation. The space of Killing generators at $p$ is denoted by $\mathfrak{kill}^g$. It is a Lie algebra with the Lie brackets \beq \big[(v,A),(w,B)\big] \= \big(A(w)-B(v),[A,B]+\Rm(g)_p(v{\wedge}w)\big) \eeq and it is called the {\it Nomizu algebra of $(M,g,p)$}. Since $(M,g)$ is real analytic, by \cite[Thm 2]{No} there exists a neighborhood $\W_p \subset M$ of $p$ such that for any $(v,A) \in \mathfrak{kill}^g$ there exists a Killing vector field $X$ on $\W_p$ with $X_p=v$ and $-(\n^gX)_p=A$. By \cite[Thm XI]{Pa} (see also \cite[Thm A.4]{BL}) there exists a local Lie group of isometries whose infinitesimal generators are exactly the Killing vector fields in $\mathfrak{kill}^g$. \smallskip

This fact admits a converse, namely

\begin{theorem}[\cite{Ped2}, Thm B] Let $(M,g)$ be a locally homogeneous $\cC^1$-Riemannian manifold. If there exist a point $p\in M$ and a locally compact, effective local topological group of isometries which acts transitively on $(M,g,p)$, then $(M,g)$ is real analytic. \label{lochomreg} \end{theorem}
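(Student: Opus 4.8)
The plan is to use the extra structure carried by the local group $G$ to present $(M,g)$, near $p$, as an open piece of a homogeneous space $G/H$ modelled on a genuine analytic Lie group, and then to observe that a $G$-invariant metric on such a model is automatically real-analytic. In this strategy the regularity of $g$ jumps from $\cC^1$ to real-analyticity all at once, at the last step, rather than through a PDE bootstrap; the price is that one must first upgrade $G$ from a topological to a Lie object, which is where the substantive work lies.

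First I would invoke the low-regularity input recorded in Section \ref{notation}: since $g$ is of class $\cC^1$, every element of $G$ acts as a $\cC^2$-diffeomorphism of $M$ preserving the metric tensor. Thus $\Theta$ is a continuous, effective action of the locally compact local group $G$ by $\cC^2$-diffeomorphisms of the smooth manifold $M$, and the local form of the Bochner--Montgomery theorem (Hilbert's fifth problem for local groups) applies: a locally compact local group acting continuously and effectively by $\cC^2$-diffeomorphisms of a smooth manifold is a local Lie group whose action map is jointly $\cC^1$. Hence $G$ is a local Lie group; its infinitesimal generators span a finite-dimensional Lie algebra $\mathfrak g$ of $\cC^1$ Killing vector fields on $\W_p$, and transitivity of $G$ forces the evaluation $\ev_p\colon\mathfrak g\to T_pM$ to be surjective.

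Next I would build the homogeneous model and transport regularity through it. The orbit map $\varpi\colon G\to M$, $\varpi(a)=\Theta(a)(p)$, is $\cC^1$ with $d\varpi_e=\ev_p$ surjective, so, writing $H=\{a\in G:\Theta(a)(p)=p\}$ for the isotropy subgroup --- closed, hence a local Lie subgroup, with Lie algebra $\mathfrak h=\ker(\ev_p)$ --- $\varpi$ descends to a $G$-equivariant local $\cC^1$-diffeomorphism $\Phi$ from a neighbourhood of $eH$ in $G/H$ onto a neighbourhood $\W_p$ of $p$. The crucial point is that $G$ carries its canonical real-analytic Lie group structure and $H$ is a closed analytic subgroup, so $G/H$ is a real-analytic manifold on which $G$ acts analytically and locally transitively near $eH$, with $G\to G/H$ admitting real-analytic local sections. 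Now $h:=\Phi^{*}g$ is a $G$-invariant metric on an open subset of $G/H$ which is a priori only continuous; yet $G$-invariance by itself forces $h_{aH}=(L_a)_{*}h_{eH}$, where $h_{eH}$ is an $\Ad(H)$-invariant inner product on $\mathfrak g/\mathfrak h$ and $L_a$ denotes the analytic translation by $a$, so composing with a real-analytic local section of $G\to G/H$ exhibits $h$ as a real-analytic tensor field. Hence $g$, expressed in the charts obtained by composing $\Phi$ with an analytic chart of $G/H$, has real-analytic components; such charts overlap real-analytically and therefore define a real-analytic structure near $p$ in which $g$ is real-analytic. Finally, because $(M,g)$ is locally homogeneous, the $\cC^2$ local isometries of its pseudogroup carry this conclusion to a neighbourhood of every point, and the resulting analytic charts are mutually compatible since local isometries of an already real-analytic metric are themselves real-analytic; thus $(M,g)$ is real analytic.

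I expect the genuine obstacle to be that middle step --- upgrading $G$ from a locally compact topological group to a local Lie group acting differentiably --- since it rests on the Bochner--Montgomery--Zippin structure theory for locally compact transformation groups, whose hypotheses are delicate to secure in the $\cC^1$ setting: one must know that local isometries of a $\cC^1$ metric are $\cC^2$-regular and, underpinning the tameness of the whole isometry pseudogroup, rigid --- determined by a $1$-jet and closed under uniform limits --- which are themselves nontrivial results in low-regularity Riemannian geometry. Everything downstream is, in spirit, the familiar passage from a transitive Lie group of isometries to its homogeneous-space presentation, the only novelty being that one works throughout with germs of local groups and local homogeneous spaces.
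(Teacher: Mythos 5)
This statement is not proved in the paper at all: it is imported verbatim as \cite{Ped2}, Theorem B, so there is no in-paper argument to compare against. Judged against the strategy of that cited reference, your outline is essentially the right one and is sound at the level of detail given: use the Calabi--Hartman-type regularity (already recorded in Section \ref{notation}) to see that elements of $G$ act by $\cC^{2}$-diffeomorphisms, apply a local Bochner--Montgomery/Montgomery--Zippin theorem to upgrade the locally compact local group to a local Lie group acting differentiably, present a neighbourhood of $p$ as an open piece of $G/H$ with a $G$-invariant (hence automatically real-analytic) metric, and propagate analyticity by local homogeneity, using the analytic Myers--Steenrod regularity to check that the resulting charts overlap analytically. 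The one point to be explicit about is that the step you yourself flag as the obstacle is not a citation-sized gap but the actual content of \cite{Ped2}: the structure theory of locally compact \emph{local} groups (the local Hilbert fifth problem) is genuinely more delicate than the global case, and one must also verify that transitivity of the local action forces the evaluation map of the resulting finite-dimensional algebra of Killing fields to be surjective at $p$. A secondary point worth stating precisely is what ``$(M,g)$ is real analytic'' means here: your construction produces an analytic atlas that is a priori only $\cC^{2}$-compatible with the given smooth structure, which is the correct (and intended) conclusion, but it deserves a sentence. Modulo these clarifications, the proposal follows the same route as the cited proof.
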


\section{The space of geometric models} \label{spaceGM} \setcounter{equation} 0

\subsection{Geometric models} \hfill \par

We recall that a geometric model is by definition (see the introduction) a $\cC^{\infty}$-smooth locally homogeneous Riemannian distance ball $(\eB, \hat{g})=(\eB_{\hat{g}}(o,\pi), \hat{g})$ of radius $\pi$ satisfying $|\sec(\hat{g})|\leq 1$ and $\inj_o(\eB, \hat{g})=\pi$. From now on, up to pulling back the metric via the Riemannian exponential map, any geometric model will be always assumed to be of the form $(B^m,\hat{g})$, where $B^m \= B_{\st}(0,\pi) \subset \bR^m$ is the $m$-dimensional Euclidean ball of radius $\pi$, and the standard coordinates of $B^m$ will be always assumed to be normal for $\hat{g}$ at $0$. Therefore the geodesics starting from $0 \in B^m$ are exactly the straight lines and $\td_{\hat{g}}(0,x)=|x|_{\st}$ for any $x \in B^m$. Hence, $\eB_{\hat{g}}(0,r)=B_{\st}(0,r)$ for any $0<r \leq \pi$. Moreover, by \cite[Lemma 1.3]{BLS} it follows that \beq \inj_x(B^m,\hat{g})=\pi-|x|_{\st}  \quad \text{ for any $x \in B^m$ }\, . \label{inj} \eeq 

Since $(B^m,\hat{g})$ is real analytic, any local isometry can be extended uniformly, i.e.

\begin{lemma}[\cite{BLS}, Lemma 1.4] For any $x \in B^m$ and for any isometry $f: B_{\st}(0,\e) \rar \eB_{\hat{g}}(x,\e)$ such that $f(0)=x$, there exists an isometry $\tilde{f}: B_{\st}(0,\pi{-}|x|_{\st}) \rar \eB_{\hat{g}}(x,\pi{-}|x|_{\st})$ such that $\tilde{f}|_{B_{\st}(0,\e)}=f$. \label{ext} \end{lemma}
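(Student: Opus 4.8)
The plan is to write down the extension explicitly through the exponential map at $x$ and then upgrade it to a global isometry by real analyticity. First recall that, since the standard coordinates of $B^m$ are normal for $\hat g$ at $0$, the exponential map $\Exp(\hat g)_0$ is the identity of $B^m=B_{\st}(0,\pi)$ and $\inj_0(B^m,\hat g)=\pi$. On the other side, by \eqref{inj} we have $\inj_x(B^m,\hat g)=\pi-|x|_{\st}$, so $\Exp(\hat g)_x$ restricts to a diffeomorphism from the open ball $B_{\st}(0,\pi-|x|_{\st})\subset T_xB^m$ onto the metric ball $\eB_{\hat g}(x,\pi-|x|_{\st})$. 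Since $\hat g$ is smooth, the isometry $f$ is a smooth diffeomorphism preserving $\hat g$ (see Section~\ref{notation}), so $df_0\colon T_0B^m\to T_xB^m$ is a linear isometry and hence maps $B_{\st}(0,\pi-|x|_{\st})$ onto the ball of radius $\pi-|x|_{\st}$ in $T_xB^m$. We may assume $\e\leq\pi-|x|_{\st}$ (otherwise the required restriction is meaningless) and define
\[
\tilde f \= \Exp(\hat g)_x\circ df_0 \colon B_{\st}(0,\pi-|x|_{\st}) \lrar \eB_{\hat g}(x,\pi-|x|_{\st}),
\]
which is a diffeomorphism onto the target distance ball with $\tilde f(0)=x$.

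Next I check that $\tilde f|_{B_{\st}(0,\e)}=f$. Every point of $B_{\st}(0,\e)$ has the form $tv$ with $v\in T_0B^m$, $|v|_{\st}=1$ and $t\in[0,\e)$, and $s\mapsto sv$ is the unit-speed geodesic from $0$ in these normal coordinates. Since $f$ is a local isometry with $f(0)=x$, the curve $s\mapsto f(sv)$ is the geodesic from $x$ with initial velocity $df_0(v)$, that is $f(sv)=\Exp(\hat g)_x\big(s\,df_0(v)\big)$ for all $s$ for which the right-hand side is defined; in particular this holds for $s\in[0,\e)\subset[0,\pi-|x|_{\st})$. Hence $f(tv)=\Exp(\hat g)_x\big(df_0(tv)\big)=\tilde f(tv)$ for all such $tv$, i.e. $f=\tilde f$ on $B_{\st}(0,\e)$.

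Finally I show that $\tilde f$ is a Riemannian isometry. Because $(B^m,\hat g)$ is locally homogeneous and smooth, it is real analytic (see Section~\ref{Nomizualg}), hence the exponential map $\Exp(\hat g)_x$ is real analytic and therefore so is $\tilde f$, being its composition with the linear map $df_0$. Thus $\tilde f^*\hat g$ is a real analytic tensor field on the connected open ball $B_{\st}(0,\pi-|x|_{\st})$, and on the nonempty open subset $B_{\st}(0,\e)$ it coincides with $f^*\hat g=\hat g$ by the previous step. By the identity principle for real analytic tensor fields on a connected manifold, $\tilde f^*\hat g=\hat g$ on all of $B_{\st}(0,\pi-|x|_{\st})$, so $\tilde f$ is an isometry onto $\big(\eB_{\hat g}(x,\pi-|x|_{\st}),\hat g\big)$ extending $f$, as claimed.

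The one point requiring care is the surjectivity of $\Exp(\hat g)_x$ onto the \emph{metric} ball $\eB_{\hat g}(x,\pi-|x|_{\st})$ in this a priori incomplete setting; this is precisely what is packaged in \eqref{inj} (itself a consequence of \cite[Lemma~1.3]{BLS}), so no additional argument is needed there.
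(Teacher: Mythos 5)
The paper does not prove this lemma itself --- it is imported verbatim from \cite[Lemma 1.4]{BLS} --- so there is no internal proof to compare against; your argument is the standard one and is correct: conjugate by exponential maps, match with $f$ on the small ball via the image of radial geodesics, and propagate the isometry property to the whole ball by real analyticity of $\hat{g}$, $\Exp(\hat{g})_x$ and hence of $\tilde{f}$. The only delicate point, namely that $\Exp(\hat{g})_x$ is defined on the full ball of radius $\pi-|x|_{\st}$ in $T_xB^m$ and maps it diffeomorphically \emph{onto} the metric ball $\eB_{\hat{g}}(x,\pi-|x|_{\st})$, you correctly isolate and attribute to \eqref{inj}, i.e. to \cite[Lemma 1.3]{BLS}, which is indeed where that content lives.
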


\noindent and, by repeating the same argument, one can also prove the following

\begin{lemma} Let $(B^m,\hat{g}_1)$ and $(B^m,\hat{g}_2)$ be two geometric models. Then, any isometry $$f: (B_{\st}(0,\e),\hat{g}_1) \rar (B_{\st}(0,\e),\hat{g}_2) \quad \text{ with $0 < \e < \pi$}$$ can be uniquely extended to a global isometry $\tilde{f}: (B^m,\hat{g}_1) \rar (B^m,\hat{g}_2)$. \label{uniqueisom} \end{lemma}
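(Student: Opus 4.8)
The plan is to mimic the proof of Lemma \ref{ext}: I would reduce to the behaviour of $f$ along radial geodesics, the one genuinely new point being that here $f$ is forced to fix the centre $0$, after which the normal coordinates make the extension completely explicit.

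First I would note that, since the two models are $\cC^{\infty}$, $f$ is automatically a diffeomorphism with $f^{*}\hat g_2=\hat g_1$, hence a distance-preserving bijection between the ball $B_{\st}(0,\e)$ endowed with the intrinsic distance of $\hat g_1$ and the one endowed with the intrinsic distance of $\hat g_2$. On each of these the distance from the centre equals $|\cdot|_{\st}$, because the radial straight segments out of $0$ are minimising; the triangle inequality then yields $\tdist\big(x,\p(B_{\st}(0,\e),\hat g_i)\big)=\e-|x|_{\st}$ for every $x$. This quantity is an isometry invariant and is maximised only at $x=0$, so $f(0)=0$.

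Next, put $L\=df_0$. Since the standard coordinates are normal for $\hat g_i$ at $0$ one has $(\hat g_1)_0=(\hat g_2)_0=\la\cdot,\cdot\ra_{\st}$, so $L\in\O(m)$ and in particular $L(B^m)=B^m$. Being a Riemannian isometry fixing $0$, $f$ carries each radial $\hat g_1$-geodesic $t\mapsto t\x$ to the $\hat g_2$-geodesic $t\mapsto tL\x$ — here I use $\Exp(\hat g_1)_0=\Exp(\hat g_2)_0=\Id_{B^m}$, valid because $\inj_0(B^m,\hat g_i)=\pi$ — whence $f(x)=Lx$ on $B_{\st}(0,\e)$. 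I would then simply set $\tilde f\=L|_{B^m}\colon B^m\rar B^m$, a $\cC^{\infty}$-diffeomorphism agreeing with $f$ on $B_{\st}(0,\e)$. Since $\tilde f$ is linear and the geometric models are real-analytic (Section \ref{Nomizualg}, or Theorem \ref{lochomreg}), both $\hat g_1$ and $\tilde f^{*}\hat g_2$ are real-analytic tensor fields on the connected manifold $B^m$ agreeing on the nonempty open set $B_{\st}(0,\e)$; by the identity principle they agree on all of $B^m$, so $\tilde f$ is an isometry $(B^m,\hat g_1)\rar(B^m,\hat g_2)$ extending $f$. Uniqueness is then immediate: any isometric extension $\tilde f'$ satisfies $\tilde f'(0)=0$ and $d\tilde f'_0=L$, and running the radial-geodesic argument on the whole of $B^m$ forces $\tilde f'=L|_{B^m}=\tilde f$.

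The only real obstacle I foresee is the first step; everything after it is word-for-word the argument of Lemma \ref{ext}, and in fact \emph{simpler}, since here the centre is fixed in the target as well and no iteration of the radial extension is needed. An alternative would be to copy Lemma \ref{ext} verbatim, extending $f$ first to $B_{\st}\big(0,\pi-|f(0)|_{\st}\big)$ and then iterating; but making such an iteration exhaust all of $B^m$ seems to require precisely the observation that $f(0)=0$, so I would retain the argument above.
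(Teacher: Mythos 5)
Your proof is correct, and it fills in precisely the two points that the paper elides by saying only ``by repeating the same argument'' as in Lemma \ref{ext}. The crucial new observation, which is needed on either route, is that $f(0)=0$: your derivation via the isometry invariance of $\tdist(\,\cdot\,,\p(B_{\st}(0,\e),\hat g_i))=\e-|\cdot|_{\st}$ (using that radial segments from $0$ are minimising, so $\td_{\hat g_i}(0,x)=|x|_{\st}$ also for the intrinsic distance of the small ball) is sound and is exactly what makes the target injectivity radius equal to $\pi$ rather than $\pi-|f(0)|_{\st}$. Where you genuinely diverge from the paper is the extension mechanism: the argument of Lemma \ref{ext} (from B\"ohm--Lafuente--Simon) proceeds by analytic continuation of the local isometry along geodesics, whereas you exploit the fact that both metrics are written in normal coordinates at the common fixed point $0$ to identify $f$ with the restriction of the orthogonal map $L=df_0$, and then verify $L^{*}\hat g_2=\hat g_1$ on all of $B^m$ by the identity principle for real-analytic tensors (real-analyticity of locally homogeneous smooth metrics, and of their normal coordinates, is available from Section \ref{Nomizualg}). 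This buys an explicit formula for $\tilde f$ and makes uniqueness immediate from the $1$-jet at $0$, at the cost of being special to the situation where source and target points are both the centre; the paper's continuation argument is the one that generalises to Lemma \ref{ext}, where $f(0)=x\neq 0$ and no such linear normal form exists.
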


\begin{remark} As pointed out in \cite[Sec 1]{BLS}, it is easy to observe that any $m$-dimensional globally homogeneous space $(M,g)$ satisfying $|\sec(g)|\leq1$ is locally isometric to a geometric model $(B^m,\hat{g})$, which is unique up to isometry by Lemma \ref{uniqueisom}. In fact, $(M,g)$ is complete (see \cite[Ch IV, Thm 4.5]{KN1}) and, fixing a point $p \in M$, by the Rauch Comparison Theorem (see e.g. \cite[Sec 6.5]{Jo}), the differential of the Riemannian exponential $\Exp(g)_p: T_pM \rar M$ of $(M,g)$ at $p$ is injective at every point of $B_{g_p}(0_p,\pi) \subset T_pM$. By choosing an orthonormal frame $u: \bR^m \rar T_pM$, one can consider the pulled-back metric $\hat{g}\= (\Exp(g)_p \circ u)^*g$ on $B^m = B_{\st}(0,\pi) \subset \bR^m$. Then, it is easy to check that $(B^m,\hat{g})$ is a geometric model and that it is locally isometric to $(M,g)$. \label{remgmHstar} \end{remark}

Notice that this argument cannot be used to prove that any $m$-dimensional, possibly incomplete locally homogeneous space $(M,g)$ verifying $|\sec(g)|\leq1$ is locally isometric to a geometric model $(B^m,\hat{g})$. Nonetheless, this claim is true and we will prove it in Section \ref{exsec}.
\smallskip

\subsection{Proof of Theorem \ref{MAIN-B}} \hfill \par

Let $(B^m,\hat{g}^{(n)})$ be a sequence of geometric models. The main purpose of this section is to prove the following 

\begin{theorem} For any $0<\a<1$, the sequence $(B^m,\hat{g}^{(n)})$ subconverges to a limit geometric model $(B^m,\hat{g}^{(\infty)})$ in the pointed $\cC^{1,\a}$-topology. \label{thmcomp} \end{theorem}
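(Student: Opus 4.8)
The plan is to apply a Cheeger--Gromov-type precompactness theorem for (possibly incomplete) pointed Riemannian manifolds to the sequence $(B^m,\hat g^{(n)},0)$, producing a $\cC^{1,\a}$-limit, and then to upgrade that limit to a genuine geometric model. Concretely, I would proceed in three stages: (1) show the sequence satisfies the hypotheses of the precompactness theorem uniformly, so that a subsequence converges in the pointed $\cC^{1,\a}$-topology to a pointed $\cC^{1,\a}$-Riemannian manifold $(M^{(\infty)},g^{(\infty)},p^{(\infty)})$; (2) identify $M^{(\infty)}$ with $B^m$ and check the metric-geometric conditions $|\sec(\hat g^{(\infty)})|\le 1$ and $\inj_0=\pi$; (3) prove that $\hat g^{(\infty)}$ is in fact smooth and locally homogeneous, so that $(B^m,\hat g^{(\infty)})$ is a geometric model.

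\textbf{Stage 1: uniform bounds and precompactness.} Each $(B^m,\hat g^{(n)})$ is a locally homogeneous Riemannian distance ball with $|\sec|\le 1$, $\inj_0=\pi$, and by \eqref{inj} the function $x\mapsto\inj_x(B^m,\hat g^{(n)})=\pi-|x|_{\st}$ is explicitly controlled; in particular $\tdist(0,\p(B^m,\hat g^{(n)}))=\pi$ for all $n$, so $\d^{(n)}=\pi$ converges trivially. Local homogeneity propagates the curvature and injectivity-radius control from $0$ to every point: on any compact ball $\overline{B_{\st}(0,r)}$ with $r<\pi$ one gets uniform two-sided sectional curvature bounds and a uniform positive lower bound on the injectivity radius (namely $\pi-r$). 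The standard consequence (Anderson-type harmonic radius estimates, as recalled in Definition \ref{harmcoord} and the Schauder estimate following it) is a uniform positive lower bound on the $\cC^{1,\a}$-harmonic radius on $\overline{B_{\st}(0,r)}$, which in harmonic coordinates gives uniform $\cC^{1,\a}$ bounds on the metric components together with uniform ellipticity. The Arzel\`a--Ascoli theorem (applied chart by chart, with a diagonal argument over an exhaustion $r\uparrow\pi$) then yields a subsequence converging in the pointed $\cC^{1,\a}$-topology, in the sense of Definition \ref{defpointed}, to a pointed $\cC^{1,\a}$-Riemannian manifold $(M^{(\infty)},g^{(\infty)},p^{(\infty)})$. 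Since normal coordinates at $0$ were fixed and $\td_{\hat g^{(n)}}(0,\cdot)=|\cdot|_{\st}$, the limit can be realized on $M^{(\infty)}=B^m$ with $p^{(\infty)}=0$, and one checks $\tdist(0,\p(B^m,\hat g^{(\infty)}))=\pi$ so that $(B^m,\hat g^{(\infty)})$ is again an incomplete distance ball of radius $\pi$; the sectional curvature bound $|\sec(\hat g^{(\infty)})|\le 1$ passes to the limit (in the appropriate weak/barrier sense valid for $\cC^{1,\a}$ metrics, e.g.\ via comparison estimates on distances and angles which are stable under $\cC^{0}$-convergence of the metric), and $\inj_0(B^m,\hat g^{(\infty)})=\pi$ follows because injectivity radius is controlled from below by curvature and distance to the boundary and from above by the radius of the ball.

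\textbf{Stage 2: regularity and local homogeneity of the limit.} The key point, and the main obstacle, is that the precompactness theorem only delivers a $\cC^{1,\a}$-metric, whereas a geometric model must be smooth and locally homogeneous. Here the plan is to exploit the Lie-theoretic structure rather than any extra curvature hypothesis (in contrast with \cite{BLS}). For each $n$, local homogeneity of $(B^m,\hat g^{(n)})$ gives, by Section \ref{Nomizualg} and Lemmas \ref{ext}--\ref{uniqueisom}, a transitive local Lie group of isometries acting on $(B^m,\hat g^{(n)},0)$, with Nomizu algebra $\mathfrak{kill}^{\hat g^{(n)}}$ of dimension bounded by $m+\binom m2$; passing to a further subsequence, these Nomizu algebras converge as Lie-algebra structures on a fixed vector space, and the corresponding local isometry actions (being generated by Killing fields whose $1$-jets at $0$ are uniformly controlled) converge in $\cC^{1,\a}$ to a transitive action by $\cC^{1,\a}$ local isometries of $(B^m,\hat g^{(\infty)},0)$. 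The limit action is effective after passing to an effective quotient, and it is locally compact because the isotropy sits inside the compact group $\O(m)$ acting on $T_0B^m$ via the (bounded) linear isotropy representation. Thus $(B^m,\hat g^{(\infty)})$ is a locally homogeneous $\cC^{1}$-Riemannian manifold admitting a locally compact, effective, transitive local topological group of isometries, so Theorem \ref{lochomreg} applies and forces $\hat g^{(\infty)}$ to be real analytic, in particular smooth. Combined with Stage 1 this shows $(B^m,\hat g^{(\infty)})$ is a geometric model, completing the proof; the subtle technical work is in making the convergence of local isometry pseudogroups precise and in verifying local compactness and effectivity of the limiting local group of isometries, which is exactly where the local Myers--Steenrod theorem of \cite{Ped2} (via Theorem \ref{lochomreg}) is indispensable.
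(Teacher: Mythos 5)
Your overall strategy (precompactness to produce a $\cC^{1,\a}$ limit, then the local Myers--Steenrod theorem via Theorem \ref{lochomreg} to upgrade regularity) is the same as the paper's, and your Stage 1 is a legitimate alternative to the paper's route: the paper obtains the $\cC^{1,\a}$ sublimit by Hochard-type local mollification to reduce to the complete Cheeger--Gromov theorem, whereas you invoke harmonic-radius estimates on an exhaustion directly; both work. The problem is in Stage 2, where you produce the transitive local group of isometries on the limit.

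You claim that, after passing to a subsequence, the Nomizu algebras $\mathfrak{kill}^{\hat g^{(n)}}$ ``converge as Lie-algebra structures on a fixed vector space'' and that the actions converge because the Killing fields' $1$-jets at $0$ are uniformly controlled. This is exactly what can fail. The Killing generators are defined by the conditions \eqref{killgen}, which involve \emph{all} covariant derivatives of the curvature, and the paper points out explicitly (in the discussion after Theorem \ref{MAIN-B}, and via the algebraic-collapse example of left-invariant blow-downs on the universal cover of $\fSL(2,\bR)$) that $\n^{\hat g^{(n)}}\Rm(\hat g^{(n)})|_{o}$ can blow up along a $\cC^{1,\a}$-convergent sequence of geometric models, and that there exist such sequences with \emph{no} algebraically convergent subsequence. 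So neither the dimension of $\mathfrak{kill}^{\hat g^{(n)}}$ nor its structure constants need subconverge, and the Killing vector fields themselves are not uniformly controlled beyond their values and first derivatives at the base point. The paper's proof of Theorem \ref{limitG} is designed precisely to avoid this: it never uses the Nomizu algebras, but instead represents each extended local isometry $f^{(n)}$ by its $1$-jet $(f^{(n)}(o),df^{(n)}|_o)$ in the orthonormal frame bundle (a locally compact space) together with the identity $f=E_{f(p)}\circ df|_p\circ L_p$ of \eqref{f-exp(n)}, where the exponential-type maps $E^{(n)}$ are controlled only through Rauch/Jacobi-field estimates at the Lipschitz level (Proposition \ref{proplimitExp}). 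Equicontinuity of the isometries then comes from the uniform bi-Lipschitz bounds, the limit object is a local \emph{topological} (not Lie) group, local compactness is obtained by taking the topological closure inside the frame bundle, and effectivity follows from injectivity of the map $\s$ in \eqref{sigma} rather than from an ``effective quotient''. To repair your argument you would need to replace the convergence of Nomizu algebras by some such jet-plus-exponential-map representation of the local isometries; as written, the step ``the Nomizu algebras converge and hence the actions converge'' is a genuine gap.
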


\noindent which in turn implies Theorem \ref{MAIN-B}. We begin with the following

\begin{prop} The sequence $(B^m,\hat{g}^{(n)})$ subconverges to an incomplete pointed $\cC^{1,\a}$-Riemannian manifold $\big(M^{(\infty)},g^{(\infty)},o\big)$ in the pointed $\cC^{1,\a}$-topology. \end{prop}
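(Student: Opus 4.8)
The plan is to apply a Cheeger--Gromov-type precompactness theorem for (possibly incomplete) pointed Riemannian manifolds, using the fact that the balls $(B^m,\hat g^{(n)})$ enjoy uniform geometric control coming directly from the definition of geometric model. First I would record the uniform bounds: each $\hat g^{(n)}$ is defined on the fixed ball $B^m = B_{\st}(0,\pi)$, satisfies $|\sec(\hat g^{(n)})| \le 1$, and has $\inj_{x}(B^m,\hat g^{(n)}) = \pi - |x|_{\st}$ by \eqref{inj}. In particular, on any fixed concentric sub-ball $B_{\st}(0,r)$ with $0<r<\pi$ we have a uniform two-sided curvature bound and a uniform positive lower bound $\pi - r$ on the injectivity radius, together with the uniform distance-to-boundary estimate $\tdist(0,\p(B^m,\hat g^{(n)})) = \pi$ for every $n$ (so $\d^{(n)} \equiv \pi$ in the language of Definition \ref{defpointed}, and $\d^{(\infty)} = \pi$ as well). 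This is exactly the hypothesis package under which local Cheeger--Gromov precompactness applies.

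Next I would invoke the harmonic-coordinate machinery of Section~2 (Definition \ref{harmcoord} and the estimates following it, in the form of \cite{And,HH}): the uniform bounds on $|\sec|$ and on the injectivity radius give a uniform positive lower bound on the $\cC^{1,\a}$-harmonic radius $\mathrm{har}^{1,\a}_x(B^m,\hat g^{(n)})$ at every point $x$ with $|x|_{\st} \le r$, for each fixed $r<\pi$ and each $n$. Covering $B_{\st}(0,r)$ by finitely many such harmonic balls, the metric components $(\hat g^{(n)})_{ij}$ expressed in these charts are uniformly bounded in $\cC^{1,\a}$ and uniformly elliptic (conditions (iii)--(iv) of Definition \ref{harmcoord}). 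By the Arzel\`a--Ascoli theorem, after passing to a subsequence the transition maps between the harmonic charts converge in $\cC^{2,\a}$ and the metric coefficients converge in $\cC^{1,\a'}$ for any $\a' < \a$ (the standard loss of a derivative in the limit) to a $\cC^{1,\a}$-Riemannian metric. A diagonal argument over an exhausting sequence $r_j \uparrow \pi$ then produces, after passing to a further subsequence, a pointed $\cC^{1,\a}$-Riemannian manifold $(M^{(\infty)},g^{(\infty)},o)$ together with the exhaustion, the embeddings $\phi^{(n)}$ and the radii $\tilde\d^{(n)} \uparrow \pi$ required by Definition \ref{defpointed}; one may reparametrize so $\a$ is relabelled to the originally fixed exponent, which is harmless since $\a \in (0,1)$ is arbitrary. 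This establishes convergence of $(B^m,\hat g^{(n)})$ to $(M^{(\infty)},g^{(\infty)},o)$ in the pointed $\cC^{1,\a}$-topology.

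It remains to see that the limit is \emph{incomplete}, i.e. that $\tdist(o,\p(M^{(\infty)},g^{(\infty)})) = \pi < +\infty$. This follows from the remarks after Definition \ref{defpointed}: since $\d^{(\infty)} = \pi$ is finite, the limit space is an incomplete Riemannian distance ball of radius $\pi$ centred at $o$, with $\tdist(o,\p(M^{(\infty)},g^{(\infty)})) = \pi$. Concretely, for every $r<\pi$ the ball $\eB_{g^{(\infty)}}(o,r)$ is relatively compact (it is the $\cC^{1,\a}$-limit of the relatively compact balls $\eB_{\hat g^{(n)}}(o,r) = B_{\st}(0,r)$), while the full ball of radius $\pi$ is not, because completeness would force $\tdist = +\infty$ by the Hopf--Rinow--Cohn--Vossen theorem, contradicting $\d^{(\infty)} = \pi$.

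The main obstacle is the injectivity-radius lower bound needed to run the Cheeger--Gromov argument: a priori the geometric models could be collapsing as $n\to\infty$ (indeed the whole point of the paper is that they often do, cf. the Aloff--Wallach discussion). The resolution is that, although $\inj_o(B^m,\hat g^{(n)}) = \pi$ is fixed at the centre by definition and there is \emph{no} collapse near $o$, one still has the uniform bound $\inj_x \ge \pi - r > 0$ on each fixed sub-ball $B_{\st}(0,r)$ via \eqref{inj}; since the pointed convergence of Definition \ref{defpointed} is tested only on relatively compact exhausting sets, this localized non-collapsing is exactly what is required, and no global injectivity-radius bound is needed. The second delicate point is bookkeeping the fact that the domains are all the fixed ball $B^m$ with $\d^{(n)} \equiv \pi$, so the exhaustion and the $\phi^{(n)}$ can in fact be taken to be (restrictions of) the identity on concentric balls; this makes the verification of conditions i)--iii) of Definition \ref{defpointed} essentially immediate and reduces the content of the proposition to the $\cC^{1,\a}$-precompactness of the metric tensors, which is the harmonic-radius estimate above.
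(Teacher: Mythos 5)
Your argument is correct in substance but follows a genuinely different route from the paper. You run the local Cheeger--Gromov argument directly on the incomplete balls: a uniform lower bound on the $\cC^{1,\a}$-harmonic radius on each fixed sub-ball $B_{\st}(0,r)$, $r<\pi$ (coming from $|\sec(\hat g^{(n)})|\leq 1$ and $\inj_x\geq \pi-r$ via \eqref{inj}), followed by Arzel\`a--Ascoli in harmonic charts and a diagonal argument over $r_j\uparrow\pi$. The paper instead first \emph{completes} the manifolds using Hochard's local mollification: for each $k$ it produces complete smooth metrics $g^{(n)}_k$ on open sets $M^{(n)}_k$ with $B_{\st}(0,\pi-2\e_k)\subset M^{(n)}_k\subset B_{\st}(0,\pi-\e_k)$, agreeing with $\hat g^{(n)}$ on $B_{\st}(0,\pi-3\e_k)$ and with curvature bounded by $c\,\e_k^{-2}$; it then applies the precompactness theorem for \emph{complete} pointed manifolds for each fixed $k$, and assembles $(M^{(\infty)},g^{(\infty)},o)$ as a direct limit of the resulting limit balls along isometric embeddings $\f_k$. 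The mollification buys the author the convenience of only ever quoting the complete version of the precompactness theorem; your version must instead lean on a boundary-local form of the harmonic-radius estimate for domains in incomplete manifolds, which is legitimate and is in fact what the paper itself invokes (via \cite[Thm 6]{HH}) in the proof of Corollary \ref{corcomp}. The direct-limit bookkeeping that the paper makes explicit is absorbed into your diagonal argument.

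One caveat: your closing claim that the embeddings $\phi^{(n)}$ can be taken to be restrictions of the identity on concentric balls is an overreach. In the fixed normal coordinates the coefficients of $\hat g^{(n)}$ are controlled only in $\cC^{0}$, not uniformly in $\cC^{1,\a}$ --- this is precisely why one passes to harmonic coordinates --- so there is no reason for $\hat g^{(n)}$ itself to subconverge in $\cC^{1,\a}$ on compact subsets of $B^m$, and the embeddings produced by your own construction are compositions of harmonic charts, not the identity. This does not damage the proof (the sandwich condition and items i)--iii) of Definition \ref{defpointed} still follow from the chart construction), but the verification is not ``essentially immediate'' in the way your last paragraph suggests.
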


\begin{proof} We use local mollifications in the sense of Hochard \cite{Ho}. More concretely, fix a sequence $(\e_k) \subset \bR$ with $0<\e_k<<1$ and $\e_k \rar 0$. By \cite[Lemma 6.2]{Ho} (see also \cite[Lemma 4.3]{ST}) and \eqref{inj} there exists a uniform constant $c=c(m)\geq1$ and, for any fixed $k \in \bN$, there exist open sets $$B_{\st}(0,\pi-2\e_k) \subset M^{(n)}_k \subset B_{\st}(0,\pi-\e_k)$$ and smooth Riemannian metrics $g^{(n)}_k$ on $M^{(n)}_k$ such that: \begin{itemize}
\item[i)] $\big(M^{(n)}_k, g^{(n)}_k\big)$ are complete smooth Riemannian manifolds,
\item[ii)] $g^{(n)}_k = \hat{g}^{(n)}$ on $B_{\st}(0,\pi-3\e_k)$,
\item[iii)] $\sup\big\{\big|\Rm(g^{(n)}_k)_x\big|_{\hat{g}^{(n)}_k} : x \in M^{(n)}_k\big\}<c\,\e_k^{-2}$.
\end{itemize} From (ii) it follows that ${\rm inj}_0\big(M^{(n)}_k,g^{(n)}_k\big) \geq \pi-3\e_k$. Therefore by (i), (iii) and the Cheeger-Gromov precompactness Theorem (see e.g. \cite[Thm 15]{HH}) we can extract a subsequence in such a way that $\big(M^{(n)}_1,g^{(n)}_1,0\big)$ converges in the pointed $\cC^{1,\a}$-topology to a pointed $\cC^{1,\a}$-Riemannian manifold $\big(M^{(\infty)}_1,g^{(\infty)}_1,o_1\big)$ as $n \rar +\infty$. Iterating this construction for any $k \in \bN$ and using a Cantor diagonal procedure, we can extract a subsequence in such a way that $\big(M^{(n)}_k,g^{(n)}_k,0\big)$ converges in the pointed $\cC^{1,\a}$-topology to a pointed $\cC^{1,\a}$-Riemannian manifold $\big(M^{(\infty)}_k,g^{(\infty)}_k,o_k\big)$ as $n \rar +\infty$ for any fixed $k \in \bN$.

In particular, for any $k \in \bN$ we have a sequence of $\cC^{2,\a}$-embeddings $$\psi^{(n)}_k: \eB_{g^{(\infty)}_k}(o_k,\pi-4\e_k) \subset M^{(\infty)}_k \rar M^{(n)}_k$$ such that $\psi_k^{(n)}(o_k)=0$ and $(\psi^{(n)}_k)^*g^{(n)}_k$ converges in the $\cC^{1,\a}$-topology to $g^{(\infty)}_k$ in $\eB_{g^{(\infty)}_k}(o_k,\pi-4\e_k)$ as $n \rar +\infty$. This implies that $\psi_k^{(n)}\big(\eB_{g_k}(o_k,\pi-4\e_k)\big) \subset B_{\st}(0,\pi-3\e_k)$ for $n \in \bN$ sufficiently large and therefore by (ii) it follows that $(\psi^{(n)}_k)^*\hat{g}^{(n)}$ converges in the $\cC^{1,\a}$-topology to $g^{(\infty)}_k$ in $\eB_{g^{(\infty)}_k}(o_k,\pi-4\e_k)$ as $n \rar +\infty$. Hence, for any $k \in \bN$ there exists a pointed isometric embedding $$\f_k: \big(\eB_{g^{(\infty)}_k}(o_k,\pi-4\e_k),g^{(\infty)}_k,o_k\big) \rar \big(\eB_{g^{(\infty)}_{k+1}}(o_{k+1},\pi-4\e_{k+1}),g^{(\infty)}_{k+1},o_{k+1}\big) \,\, .$$ Therefore, we may consider the direct limit (see e.g. \cite[Ch 4, Sec 2.3]{Cho}) $$\big(M^{(\infty)},g^{(\infty)},o\big) \= \lim_{\longrightarrow}\Big\{\big(\eB_{g^{(\infty)}_k}(o_k,\pi-4\e_k),g^{(\infty)}_k,o_k\big),\f_k\Big\}_{k\in \bN} \,\, .$$ The triple $\big(M^{(\infty)},g^{(\infty)},o\big)$ is a pointed $\cC^{1,\a}$-Riemannian manifold. Moreover, by construction we get the statement. \end{proof}

We pass to a subsequence of $(B^m,\hat{g}^{(n)})$ such that there exist an exhaustion $(U^{(n)})$ of $M^{(\infty)}$ by relatively compact open sets centered at $o$ and a sequence of $\cC^{2,\a}$-diffeomorphisms $\phi^{(n)}: U^{(n)} \rar B_{\st}\big(0,\pi{-}{\textstyle\frac1{2^n}}\big)$ such that $\phi^{(n)}(o)=0$ and $g^{(n)}\=\phi^{(n)*}\hat{g}^{(n)}$ converges in the $\cC^{1,\a}$-topology to $g^{(\infty)}$. \smallskip

Let us define the subsets $$\begin{gathered}
\cU^{(n)} \= \bigcup_{p\in U^{(n)}} \{p\}\times B_{g^{(n)}_p}\big(0_p,\pi-{\textstyle\frac1{2^n}}-|\phi^{(n)}(p)|_{\st}\big) \subset TM^{(\infty)} \,\, , \\
\cU^{(\infty)} \= \bigcup_{p\in M^{(\infty)}} \{p\}\times B_{g^{(\infty)}_p}\big(0_p,\pi-\td_{g^{(\infty)}}(o,p)\big) \subset TM^{(\infty)} \,\, .
\end{gathered}$$ and the subsets $$\begin{gathered}
\cV^{(n)} \= \bigcup_{p\in U^{(n)}} \{p\}\times \eB_{g^{(n)}}\big(p,\pi-{\textstyle\frac1{2^n}}-|\phi^{(n)}(p)|_{\st}\big) \subset M^{(\infty)}\times M^{(\infty)} \,\, , \\
\cV^{(\infty)} \= \bigcup_{p\in M^{(\infty)}} \{p\}\times \eB_{g^{(\infty)}}\big(p,\pi-\td_{g^{(\infty)}}(o,p)\big) \subset M^{(\infty)}\times M^{(\infty)} \,\, .
\end{gathered}$$ Notice that the metrics $g^{(n)}$ on $U^{(n)}$ are merely of class $\cC^{1,\a}$. Nonetheless, for any $n \in \bN$ we can consider $$E^{(n)} : \cU^{(n)} \rar U^{(n)} \,\, , \quad E^{(n)}(p,v) \= (\phi^{(n)})^{-1}\Big(\Exp(\hat{g}^{(n)})\big(\phi^{(n)}(p),(d\phi^{(n)})_p(v)\big)\Big) \,\, .$$ By construction, for any $n \in \bN$ the map $$\check{E}^{(n)}: \cU^{(n)} \rar \cV^{(n)} \,\, , \quad \check{E}^{(n)}(p,v)=\big(p,E^{(n)}(p,v)\big)$$ is a $\cC^{2,\a}$-diffeomorphism and we denote the inverse by $\check{L}^{(n)}: \cV^{(n)} \rar \cU^{(n)}$. Notice that the applications $\check{L}^{(n)}$ are necessarily of the form $\check{L}^{(n)}(p,q)=\big(p,L^{(n)}(p,q)\big)$, with $E^{(n)}\big(p,L^{(n)}(p,q)\big)=q$ for any $(p,q) \in \cV^{(n)}$ and $L^{(n)}\big(p,E^{(n)}(p,v)\big)=v$ for any $(p,v) \in \cU^{(n)}$.

\begin{prop} \label{proplimitExp} The sequence $(\check{E}^{(n)})$ subconverges uniformly on compact sets to a $\cC^{0,1}$-homeomorphism \beq \check{E}^{(\infty)}: \cU^{(\infty)} \rar \cV^{(\infty)} \,\, , \quad \check{E}^{(\infty)}(p,v)=\big(p,E^{(\infty)}(p,v)\big) \,\, . \label{limitExp} \eeq \end{prop}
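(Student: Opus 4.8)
The plan is to obtain the limit map as a locally uniform limit of the maps $\check{E}^{(n)}$ by an Arzel\`a--Ascoli argument, and then to promote the limit from ``continuous'' to ``bi-Lipschitz homeomorphism'' using the inverse maps $\check{L}^{(n)}$. First I would record that, since $\phi^{(n)*}\hat{g}^{(n)}=g^{(n)}\to g^{(\infty)}$ in the $\cC^{1,\a}$-topology on the exhausting sets $U^{(n)}$, on any fixed compact $K\subset M^{(\infty)}$ the metrics $g^{(n)}$ are, for $n$ large, uniformly equivalent to $g^{(\infty)}$ and have $\cC^{1,\a}$-norms (and hence Christoffel symbols in $\cC^{0,\a}$) bounded independently of $n$. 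Consequently the geodesic equation for $g^{(n)}$ — written in the fixed coordinates supplied by an atlas of $M^{(\infty)}$ — has $\cC^{0,\a}$ coefficients with uniform bounds, so by standard ODE estimates (continuous dependence with $\cC^{0,\a}$, not $\cC^1$, coefficients suffices for existence, uniqueness and uniform a priori bounds on solutions and their first $t$-derivatives) the exponential maps $\Exp(g^{(n)})$, and therefore the maps $E^{(n)}$ and $\check{E}^{(n)}$, are defined on $\cU^{(n)}$, equi-Lipschitz and uniformly bounded on compact subsets of $\cU^{(\infty)}$ (here one uses that $\cU^{(n)}\to\cU^{(\infty)}$ in the sense that every compact subset of $\cU^{(\infty)}$ lies in $\cU^{(n)}$ for $n$ large, which follows from $\phi^{(n)}\to\mathrm{id}$ and $|\phi^{(n)}(p)|_{\st}\to\td_{g^{(\infty)}}(o,p)$).

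Given this equicontinuity, Arzel\`a--Ascoli yields a subsequence along which $\check{E}^{(n)}$ converges uniformly on compact sets of $\cU^{(\infty)}$ to a Lipschitz map $\check{E}^{(\infty)}(p,v)=(p,E^{(\infty)}(p,v))$ taking values in $M^{(\infty)}\times M^{(\infty)}$; a short argument using $\td_{g^{(n)}}\to\td_{g^{(\infty)}}$ (which is part of the $\cC^{1,\a}$-convergence) shows $E^{(\infty)}(p,v)\in\eB_{g^{(\infty)}}(p,\pi-\td_{g^{(\infty)}}(o,p))$, so the image lies in $\cV^{(\infty)}$. By the same token $\check{E}^{(\infty)}$ is surjective onto $\cV^{(\infty)}$: every $(p,q)\in\cV^{(\infty)}$ is joined to $p$ by a minimizing $g^{(\infty)}$-geodesic, which is a limit of $g^{(n)}$-geodesics, so $q=E^{(\infty)}(p,v)$ for the corresponding limiting initial velocity $v$. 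To get injectivity and continuity of the inverse I would run the identical Arzel\`a--Ascoli argument on the inverse maps: the $\check{L}^{(n)}=(\check{E}^{(n)})^{-1}$ are likewise equi-Lipschitz on compact subsets of $\cV^{(\infty)}$ — this is where the curvature bound $|\sec(\hat{g}^{(n)})|\le1$ together with the injectivity radius estimate \eqref{inj} enters, via the Rauch comparison theorem, to give uniform lower bounds on the differential of $E^{(n)}$ and hence uniform Lipschitz control on $L^{(n)}$ up to radius $\pi-\td_{g^{(\infty)}}(o,p)$. Passing to a further subsequence, $\check{L}^{(n)}\to\check{L}^{(\infty)}$ uniformly on compacta, and one checks $\check{L}^{(\infty)}\circ\check{E}^{(\infty)}=\mathrm{id}_{\cU^{(\infty)}}$ and $\check{E}^{(\infty)}\circ\check{L}^{(\infty)}=\mathrm{id}_{\cV^{(\infty)}}$ by passing to the limit in the corresponding identities for finite $n$ (legitimate because the convergences are uniform on compacta and the domains exhaust). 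This exhibits $\check{E}^{(\infty)}$ as a $\cC^{0,1}$-homeomorphism with $\cC^{0,1}$-inverse, which is the assertion.

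The main obstacle I anticipate is the low regularity of the limit metric: $g^{(\infty)}$ is only $\cC^{1,\a}$, so $\Exp(g^{(\infty)})$ is not a priori smooth, and the naive approach of ``take the exponential map of the limit'' is not available. The fix is exactly the structure already set up in the excerpt — do not define $\check{E}^{(\infty)}$ intrinsically, but only as a limit of the smooth maps $\check{E}^{(n)}$, and carry out all compactness estimates at the level of the $g^{(n)}$ before passing to the limit, using that $\cC^{0,\a}$-bounds on Christoffel symbols are enough for the classical ODE existence/uniqueness/continuous-dependence theory. A secondary technical point is bookkeeping the moving domains $\cU^{(n)},\cV^{(n)}$: one must verify that they exhaust $\cU^{(\infty)},\cV^{(\infty)}$ respectively so that ``uniform convergence on compact sets'' is meaningful, and that the radius functions $p\mapsto \pi-\tfrac1{2^n}-|\phi^{(n)}(p)|_{\st}$ converge locally uniformly to $p\mapsto\pi-\td_{g^{(\infty)}}(o,p)$ — both of which are immediate from $\phi^{(n)}\to\mathrm{id}$ in $\cC^{2,\a}$ and the convergence of distances. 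None of these steps requires the local homogeneity of the $\hat g^{(n)}$; that hypothesis will only be used later to upgrade $\cC^{1,\a}$ to $\cC^\infty$.
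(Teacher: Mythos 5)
Your architecture coincides with the paper's: both proofs run Arzel\`a--Ascoli plus a Cantor diagonal over an exhaustion by compacta, with the compactness supplied by uniform two-sided bounds on $d\check{E}^{(n)}$ coming from the curvature bound $|\sec(\hat g^{(n)})|\le 1$, and the homeomorphism property of the limit read off from the lower bound together with convergence of the inverses $\check L^{(n)}$. The one step whose justification does not hold up is your derivation of the \emph{upper} equi-Lipschitz bound on $\check E^{(n)}$ from uniform $\cC^{0,\a}$ bounds on the Christoffel symbols: a vector field that is merely H\"older continuous in the state variables yields at best an equicontinuous, not equi-Lipschitz, dependence of solutions on initial data (and does not even guarantee uniqueness in general --- uniqueness here is free only because each $\hat g^{(n)}$ is smooth upstairs). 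Equicontinuity would still feed Arzel\`a--Ascoli, but it would not deliver the asserted $\cC^{0,1}$ regularity of the limit. The repair is exactly the tool you already deploy for the lower bound: the differential $\big(d\check E^{(n)}\big)_{(p,v)}(w_1,w_2)=\big(w_1,J^{(n)}_{w_1,w_2}(v;1)\big)$ is governed by the Jacobi equation, whose coefficients are controlled by the curvature bound alone, and Rauch comparison gives the two-sided estimates $\big|J^{(n)}_{w_1,0}(v;t)\big|\le\cosh\!\big(t|v|\big)|w_1|$ and $\tfrac{\sin(t|v|)}{|v|}|w_2|\le\big|J^{(n)}_{0,w_2}(v;t)\big|\le\tfrac{\sinh(t|v|)}{|v|}|w_2|$; this is precisely the paper's route (its estimates \eqref{estdcE1}--\eqref{estdcE2}), and it yields the uniform local bi-Lipschitz property on compact subsets of $\cU^{(\infty)}$, where $|v|$ stays away from $\pi$ so the $\sin$ factor in the lower bound is bounded away from zero. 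With that substitution your argument is the paper's argument; the remaining bookkeeping (exhaustion of the moving domains, passing to the limit in $\check L^{(n)}\circ\check E^{(n)}=\mathrm{id}$) matches what the paper carries out in the proposition and in the paragraph immediately following it.
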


\begin{proof} The proof is based on an Ascoli-Arzel{\aac} diagonal argument. Firstly, we need to compute the differential of $E^{(n)}$ at a point $(p,v) \in \cU^{(n)}$. For this purpose, we pick two vectors $w_1\in T_pM^{(\infty)}$ and $w_2\in T_v(T_pM^{(\infty)})=T_pM^{(\infty)}$, we consider the parallel transports of the push-forwards $(d\phi^{(n)})_p(v),(d\phi^{(n)})_p(w_2) \in T_{\phi^{(n)}(p)}B^m$ along the $\hat{g}^{(n)}$-geodesic $s \mapsto \Exp(\hat{g}^{(n)})(\phi^{(n)}(p),s(d\phi^{(n)})_p(w_1))$ and we pull them back by using $\phi^{(n)}$. We indicate such paths with $v^{(n)}(w_1;s)$ and $w_2^{(n)}(w_1;s)$, respectively. Moreover, consider the $\hat{g}^{(n)}$-Jacobi field along the $\hat{g}^{(n)}$-geodesic $t \mapsto \Exp(\hat{g}^{(n)})(\phi^{(n)}(p),t(d\phi^{(n)})_p(v))$ with initial conditions $(d\phi^{(n)})_p(w_1),(d\phi^{(n)})_p(w_2) \in T_{\phi^{(n)}(p)}B^m$ and pull it back by using $\phi^{(n)}$. We indicate it with $J^{(n)}_{w_1,w_2}(v;t)$. Then, it is straightforward to check that the differential $$\big(dE^{(n)}\big)_{(p,v)} : T_pM^{(\infty)} \oplus T_pM^{(\infty)} \rar T_{E^{(n)}(p,v)}M^{(\infty)}$$ is given by \beq \begin{aligned}
\big(dE^{(n)}\big)_{(p,v)} (w_1,w_2) &= \frac{\p}{\p s} E^{(n)}\!\Big(E^{(n)}(p,sw_1),t\big(v^{(n)}(w_1;s)+s\,w_2^{(n)}(w_1;s)\big)\Big)\bigg|_{\substack{s=0\\t=1}} \\
&=J^{(n)}_{w_1,w_2}(v;1) \,\, . \label{dE(n)} \end{aligned} \eeq We recall that, since $|\sec(g^{(n)})|\leq1$ by assumption, from the Rauch comparison Theorem (see \cite[Thm 6.5.1, Thm 6.5.2]{Jo}) it follows that for any $t \in [0,1]$ \beq \begin{aligned}
\big|J^{(n)}_{w_1,0}(v;t) \big|_{g^{(n)}} &\leq \cosh\!\big(t|v|_{g^{(n)}}\!\big)|w_1|_{g^{(n)}} \,\, , \\
\frac{\sin\!\big(t|v|_{g^{(n)}}\!\big)}{|v|_{g^{(n)}}}|w_2|_{g^{(n)}} \leq \big|J^{(n)}_{0,w_2}(v;t) \big|_{g^{(n)}} &\leq \frac{\sinh\!\big(t|v|_{g^{(n)}}\!\big)}{|v|_{g^{(n)}}}|w_2|_{g^{(n)}} \,\, .
\end{aligned} \label{JacRau} \eeq By \eqref{dE(n)}, the differential $$\big(d\check{E}^{(n)}\big)_{(p,v)} : T_pM^{(\infty)} \oplus T_pM^{(\infty)} \rar T_pM^{(\infty)} \oplus T_{E^{(n)}(p,v)}M^{(\infty)}$$ of the map $\check{E}^{(n)}$ is given by $$\big(d\check{E}^{(n)}\big)_{(p,v)} (w_1,w_2) = \big(w_1,J^{(n)}_{w_1,w_2}(v;1)\big) \,\, .$$ By \eqref{JacRau} it follows that \beq \begin{aligned}
\big|\big(d\check{E}^{(n)}\big)_{(p,v)} (w_1,w_2)\big|_{g^{(n)}} &\leq |w_1|_{g^{(n)}}+\big|J^{(n)}_{w_1,w_2}(v;1) \big|_{g^{(n)}} \\
&\leq |w_1|_{g^{(n)}} + \big|J^{(n)}_{w_1,0}(v;1) \big|_{g^{(n)}} + \big|J^{(n)}_{0,w_2}(v;1) \big|_{g^{(n)}} \\
&\leq \bigg(1+\cosh\!\big(|v|_{g^{(n)}}\!\big)+\frac{\sinh\!\big(|v|_{g^{(n)}}\!\big)}{|v|_{g^{(n)}}}\bigg)\big(|w_1|_{g^{(n)}}+|w_2|_{g^{(n)}}\big)
\end{aligned} \label{estdcE1} \eeq and also \beq \begin{aligned}
\sqrt2 \, \big|\big(d\check{E}^{(n)}\big)_{(p,v)} (w_1,w_2)\big|_{g^{(n)}} &\geq |w_1|_{g^{(n)}}+\tfrac1{12}\big|J^{(n)}_{w_1,w_2}(v;1) \big|_{g^{(n)}} \\
&\geq |w_1|_{g^{(n)}}-\tfrac1{12}\big|J^{(n)}_{w_1,0}(v;1) \big|_{g^{(n)}}+\tfrac1{12}\big|J^{(n)}_{0,w_2}(v;1) \big|_{g^{(n)}} \quad . \\
&\geq \big(1-\tfrac1{12}\cosh\!\big(|v|_{g^{(n)}}\!\big)\big)|w_1|_{g^{(n)}}+\frac{\sin\!\big(t|v|_{g^{(n)}}\!\big)}{12|v|_{g^{(n)}}}|w_2|_{g^{(n)}} 
\end{aligned} \label{estdcE2} \eeq

From \eqref{dE(n)},\eqref{estdcE1} and \eqref{estdcE2}, since $g^{(n)}$ converges in the $\cC^{1,\a}$-topology to $g^{(\infty)}$, for any compact set $K \subset M$ there exists $\bar{\d}=\bar{\d}(K)>0$ such that, for any fixed $0<\d<\bar{\d}$, there exist $\bar{n}=\bar{n}(K,\d) \in \bN$ and $C=C(K,\d), c=c(K,\d)>0$ such that for any $p \in K$ and $n\geq\bar{n}$ $$\emptyset \neq B_{g^{(\infty)}_p}\big(0_p,\pi-\d-\td_{g^{(\infty)}}(o,p)\big) \subset B_{g^{(n)}_p}\big(0_p,\pi-{\textstyle\frac1{2^n}}-|\phi^{(n)}(p)|_{\st}\big) \subset T_pM^{(\infty)}$$ and for any $v \in T_pM^{(\infty)}$ with $|v|_{g^{(\infty)}} \leq \pi-\d-\td_{g^{(\infty)}}(o,p)$ it holds that \beq \begin{gathered}
\td_{g^{(\infty)}}\big(\check{E}^{(n)}(p,v),(o,o)\big)\leq C <\sqrt2\pi \,\, , \\ 
e^{-c}(|w_1|_{g^{(\infty)}}+|w_2|_{g^{(\infty)}}) < \big| \big(d\check{E}^{(n)}\big)_{(p,v)}(w_1,w_2) \big|_{g^{(\infty)}} < e^c(|w_1|_{g^{(\infty)}}+|w_2|_{g^{(\infty)}}) \,\, . \label{E(n)biL} \end{gathered} \eeq Here, we considered the distance $$\td_{g^{(\infty)}}\big((p_1,q_1),(p_2,q_2)\big) \= \sqrt{\td_{g^{(\infty)}}(p_1,p_2)^2+\td_{g^{(\infty)}}(q_1,q_2)^2}$$ on the product $M^{(\infty)} \times M^{(\infty)}$.

Therefore, \eqref{E(n)biL} implies that the sequence of diffeomorphisms $(\check{E}^{(n)})$ is uniformly locally bi-Lipschitz (see e.g. \cite[Lemma 2.10]{KSS}). Let us fix a compact set $K \subset M$, let $\bar{\d}=\bar{\d}(K)$ be as above and consider a sequence $(\d_i) \subset \bR$ such that $0<\d_{i+1}<\d_i<\bar{\d}$ and $\d_i\rar0$. By combining the Ascoli-Arzel{\aac} Theorem with a Cantor diagonal procedure, from \eqref{E(n)biL} it follows that the restriction $\check{E}^{(n)}\big|_{\cU^{(n)}_K}$ subconverges uniformly on compact sets of $\cU^{(\infty)}_K$, where we have set $\cU^{(n)}_K \= \cU^{(n)} \cap TM^{(\infty)}|_K$ and $\cU^{(\infty)}_K \= \cU^{(\infty)} \cap TM^{(\infty)}|_K$. Now, by considering an exhaustion of $M^{(\infty)}$ by compact sets $K_j$ and by applying again a Cantor diagonal procedure, we get that $(\check{E}^{(n)})$ subconverges uniformly on compact sets to a $\cC^{0,1}$-homeomorphism $$\check{E}^{(\infty)}: \cU^{(\infty)} \rar \cV^{(\infty)} \,\, , \quad \check{E}^{(\infty)}(p,v) = \big(p,E^{(\infty)}(p,v)\big)$$ and this completes the proof. \end{proof}

We pass to a subsequence of $(B^m,\hat{g}^{(n)})$ such that the maps $\check{E}^{(n)}$ converge uniformly on compact sets to a $\cC^{0,1}$-homeomorphism $\check{E}^{(\infty)}$, which is necessarily of the form \eqref{limitExp}. We denote its inverse by $$\check{L}^{(\infty)}: \cV^{(\infty)} \rar \cU^{(\infty)} \,\, , \quad \check{L}^{(\infty)}(p,q) \= \big(p,L^{(\infty)}(p,q)\big) \,\, .$$ By passing to a subsequence, we can assume that $(\check{L}^{(n)})$ converges uniformly on compact sets to $\check{L}^{(\infty)}$. For the sake of shortness, we set $$E^{(n)}_p \= E^{(n)}(p, \cdot) \,\, , \quad E^{(\infty)}_p \= E^{(\infty)}(p, \cdot) \,\, , \quad L^{(n)}_p \= L^{(n)}(p, \cdot) \,\, , \quad L^{(\infty)}_p \= L^{(\infty)}(p, \cdot) \,\, .$$ Clearly it holds that $L^{(n)}_p = \big(E^{(n)}_p\big)^{-1}$ and $L^{(\infty)}_p = \big(E^{(\infty)}_p\big)^{-1}$. \smallskip

In the next theorem we will construct explicitly a local topological group of isometries acting on the limit manifold $(M^{(\infty)},g^{(\infty)},o)$. This will allow us to apply Theorem \ref{lochomreg} afterwards. Firstly, we denote by ${\rm O}_{g^{(\infty)}}\big(M^{(\infty)}\big) \rar M^{(\infty)}$ the orthonormal frame bundle of $(M^{(\infty)},g^{(\infty)})$. Secondly, letting $u_{\st}$ be the standard orthonormal frame of $\big(T_0B^m,\hat{g}^{(n)}_0\big)=\big(\bR^m,\la \cdot, \cdot \ra_{\st}\big)$, we can assume up to pass to a subsequence that $u^{(n)}_{\st} \= ((d\phi^{(n)})_o)^{-1}(u_{\st})$ converges to an orthonormal frame $u^{(\infty)}_{\st}$ for $\big(T_oM^{(\infty)},g^{(\infty)}_o\big)$. This yields an identification $${\rm O}_{g^{(\infty)}}\big(M^{(\infty)}\big) = \Big\{(p,a): p \in M^{(\infty)} \, , \,\, a: \big(T_oM^{(\infty)},g^{(\infty)}_o\big) \rar \big(T_pM^{(\infty)},g^{(\infty)}_p\big) \text{ linear isometry } \Big\}$$

\begin{theorem} There exists a locally compact and effective local topological group of isometries acting transitively on the pointed $\cC^{1,\a}$-Riemannian manifold $(M^{(\infty)},g^{(\infty)},o)$. \label{limitG} \end{theorem}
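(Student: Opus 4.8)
The plan is to produce the desired local group by taking subsequential limits, inside the orthonormal frame bundle $\O_{g^{(\infty)}}(M^{(\infty)})$, of the local isometry pseudo-groups of the geometric models $(B^m,\hat{g}^{(n)})$. Conjugating by $\phi^{(n)}$ identifies an open piece of $(B^m,\hat{g}^{(n)})$ with $(U^{(n)},g^{(n)})$, and a local isometry $\psi$ near $o$ commutes with the exponential maps: if $\psi(o)=p$ and $(d\psi)_o=a$, then $\psi=E^{(n)}_p\circ a\circ L^{(n)}_o$ on its domain; call this map $f^{(n)}_{(p,a)}$. I would let $\hat{G}^{(n)}$ be the set of pairs $(p,a)$, with $p\in U^{(n)}$ and $a:(T_oM^{(\infty)},g^{(n)}_o)\to(T_pM^{(\infty)},g^{(n)}_p)$ a linear isometry, for which $f^{(n)}_{(p,a)}$ is a local isometry near $o$; all of these, together with $\O_{g^{(\infty)}}(M^{(\infty)})$, live in the bundle of linear maps $T_oM^{(\infty)}\to T_pM^{(\infty)}$ over $M^{(\infty)}$, where subsequential limits make sense. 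By Lemma \ref{ext} transported through $\phi^{(n)}$, each such pair extends to a unique isometry defined on a ball; $\hat{G}^{(n)}$ contains the unit $(o,\Id)$; on a fixed small neighborhood of $o$ it is closed under the partial composition $(p,a)\cdot(q,b)\=\big(f^{(n)}_{(p,a)}(q),(df^{(n)}_{(p,a)})_q\circ b\big)$ inherited from composition of maps; and, $(B^m,\hat{g}^{(n)})$ being locally homogeneous, it projects onto a neighborhood of $o$ in $U^{(n)}$.

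Since $\O_{g^{(\infty)}}(M^{(\infty)})$ is locally compact, after a further subsequence and a Cantor diagonal procedure along an exhaustion of $M^{(\infty)}$ I would set
\[
\hat{G}^{(\infty)}\=\big\{(p,a)\in\O_{g^{(\infty)}}(M^{(\infty)}):(p,a)=\textstyle\lim_n(p^{(n)},a^{(n)})\text{ for suitable }(p^{(n)},a^{(n)})\in\hat{G}^{(n)}\big\},
\]
and attach to each $(p,a)\in\hat{G}^{(\infty)}$ the map $f_{(p,a)}\=E^{(\infty)}_p\circ a\circ L^{(\infty)}_o$, defined on the ball around $o$ where it makes sense (cf. the domains $\cU^{(\infty)},\cV^{(\infty)}$). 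By Proposition \ref{proplimitExp}, $\check{E}^{(n)}\to\check{E}^{(\infty)}$ and $\check{L}^{(n)}\to\check{L}^{(\infty)}$ uniformly on compacta, so the local isometries $f^{(n)}_{(p^{(n)},a^{(n)})}$ of $(U^{(n)},g^{(n)})$ converge to $f_{(p,a)}$ uniformly on compacta; since $g^{(n)}\to g^{(\infty)}$ in the $\cC^{1,\a}$-topology, hence $\td_{g^{(n)}}\to\td_{g^{(\infty)}}$ locally uniformly, the limit $f_{(p,a)}$ is a distance-preserving homeomorphism onto its image, and therefore --- by the regularity theory for isometries of a $\cC^{1,\a}$-Riemannian manifold recalled in Section \ref{notation} --- a $\cC^{2,\a}$-diffeomorphism preserving $g^{(\infty)}$.

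It then remains to verify the structural properties. The element $(o,\Id_{T_oM^{(\infty)}})$, the limit of the units of the $\hat{G}^{(n)}$, lies in $\hat{G}^{(\infty)}$, and $f_{(o,\Id)}=\Id$. For footpoints close to $o$, the partial multiplication $(p,a)\cdot(q,b)\=\big(f_{(p,a)}(q),(df_{(p,a)})_q\circ b\big)$ and the inversion are defined on open sets, take values in $\hat{G}^{(\infty)}$ (being limits of the corresponding operations on the $\hat{G}^{(n)}$), and are continuous once one knows that $(df_{(p,a)})_q$ depends continuously on $(p,a,q)$. Choosing $\W_o=\eB_{g^{(\infty)}}(o,\rho)$ with $\rho$ small, $\eU_{\fG}$ the part of $\hat{G}^{(\infty)}$ lying over $\W_o$, and $\Q((p,a),x)\=f_{(p,a)}(x)$ --- continuous because $E^{(\infty)},L^{(\infty)}$ are --- yields a local topological group of isometries acting on $(M^{(\infty)},g^{(\infty)},o)$. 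It is effective: if $f_{(q,b)}$ fixes a neighborhood of $o$, then $q=f_{(q,b)}(o)=o$ and $b=(df_{(q,b)})_o=\Id$, so $f_{(q,b)}=f_{(o,\Id)}=\Id$. It is transitive: given $p\in\W_o$, choose $(p^{(n)},a^{(n)})\in\hat{G}^{(n)}$ with footpoints converging to $p$ (possible by the surjectivity above), extract a convergent subsequence of $(a^{(n)})$ using local compactness of the frame bundle, and obtain $(p,a)\in\hat{G}^{(\infty)}$ with $f_{(p,a)}(o)=p$; hence the orbit of $o$ contains $\W_o$.

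The last point, local compactness of $\hat{G}^{(\infty)}$, reduces to showing that over a fixed relatively compact neighborhood of $o$ it is closed in $\O_{g^{(\infty)}}(M^{(\infty)})$: given $(q_j,b_j)\in\hat{G}^{(\infty)}$ with $(q_j,b_j)\to(q,b)$, one selects indices $n_j\to\infty$ and elements of $\hat{G}^{(n_j)}$ within distance $1/j$ of $(q_j,b_j)$, and reads off from this diagonal sequence that $(q,b)\in\hat{G}^{(\infty)}$. I expect the two genuinely delicate steps to be precisely these: this closedness argument, which runs over the doubly indexed family $\{\hat{G}^{(n)}\}$ and relies crucially on the uniform bi-Lipschitz bounds \eqref{E(n)biL} to prevent the approximating maps and their footpoints from escaping; and the continuity of the partial multiplication, namely the continuous dependence of $(df_{(p,a)})_q$ on $(p,a,q)$, which must be established even though $g^{(\infty)}$ is only $\cC^{1,\a}$ and $\check{E}^{(\infty)}$ only Lipschitz --- this would follow by combining the Jacobi-field expression \eqref{dE(n)} for the differentials of $\check{E}^{(n)}$ with the $\cC^{2,\a}$-regularity of metric isometries recalled in Section \ref{notation}.
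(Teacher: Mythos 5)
Your proposal is correct and follows essentially the same route as the paper: there too each local isometry is encoded by its $1$-jet $(f(o),df|_o)$ in the orthonormal frame bundle via the identity $f=E^{(\infty)}_p\circ a\circ L^{(\infty)}_o$, limits of the transported local isometry groups $\tilde{G}^{(n)}$ are extracted by an Ascoli--Arzel\`a/diagonal argument, and local compactness is obtained by passing to the closure inside ${\rm O}_{g^{(\infty)}}\big(M^{(\infty)}\big)$. The one place where your plan is vaguer than the paper's --- the continuity of the partial multiplication, i.e.\ of $(df_{(p,a)})_q$ in its arguments --- is settled there not through Jacobi fields but by writing the differential at $o$ of a composition explicitly as $L^{(\infty)}_{(\cdot)}\circ E^{(\infty)}_{p_1}\circ a_1\circ L^{(\infty)}_o\circ E^{(\infty)}_{p_2}\circ a_2$, a composition of manifestly continuous maps, which avoids any need for convergence of the differentials of $\check{E}^{(n)}$.
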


\begin{proof} Since the elements of the sequence $(B^m,\hat{g}^{(n)})$ are locally homogeneous and smooth, we can pick for any $n\in \bN$ an effective local Lie group of isometries $G^{(n)}$ acting transitively on $(B^m,\hat{g}^{(n)},0)$. By Lemma \ref{ext}, any local isometry $f^{(n)} \in G^{(n)}$ admits a unique analytic extension $$f^{(n)}: B_{\st}\big(0,\pi{-}|f^{(n)}(0)|_{\st}\big) \rar \eB_{\hat{g}^{(n)}}\big(f^{(n)}(0),\pi{-}|f^{(n)}(0)|_{\st}\big) \,\, .$$ For any $n \in \bN$ and for any $f^{(n)} \in G^{(n)}$ such that $|f^{(n)}(0)|_{\st}<\frac{\pi}2$, we define $$\tilde{f}^{(n)}: \Dom(\tilde{f}^{(n)}) \rar \Im(\tilde{f}^{(n)}) \,\, , \quad \tilde{f}^{(n)} \= \phi^{(n)-1} \circ f^{(n)} \circ \big(\phi^{(n)}|_{\Dom(\tilde{f}^{(n)})}\big) \,\, ,$$ where $\Dom(\tilde{f}^{(n)}), \Im(\tilde{f}^{(n)}) \subset U^{(n)}$ are the open subsets given by $$\begin{gathered}
\Dom(\tilde{f}^{(n)}) \= \big(f^{(n)}\circ\phi^{(n)}\big)^{-1}\Big(B_{\st}\big(0,\pi-{\textstyle\frac1{2^n}}\big)\cap \eB_{\hat{g}^{(n)}}\big(f^{(n)}(0),\pi{-}|f^{(n)}(0)|_{\st}\big) \Big) \,\, , \\
\Im(\tilde{f}^{(n)}) \= \big(\phi^{(n)}\big)^{-1}\Big(B_{\st}\big(0,\pi-{\textstyle\frac1{2^n}}\big)\cap \eB_{\hat{g}^{(n)}}\big(f^{(n)}(0),\pi{-}|f^{(n)}(0)|_{\st}\big) \Big) \,\, . \end{gathered}$$ Hence $\tilde{G}^{(n)} \= \big\{\tilde{f}^{(n)}: f^{(n)} \in G^{(n)} , \, |f^{(n)}(0)|_{\st}<\frac{\pi}2 \big\}$ is a transitive, effective local Lie group of isometries on $(M^{(\infty)},g^{(n)},o)$. We are going to specify the local group structure of $\tilde{G}^{(n)}$ below.

The multiplication \beq \tilde{\nu}^{(n)}:\eD(\tilde{G}^{(n)}) \rar \tilde{G}^{(n)} \label{nu(n)} \eeq is defined in the following way. First, we set $$\eD(\tilde{G}^{(n)}) \= \big\{\big(\tilde{f}^{(n)}_1,\tilde{f}^{(n)}_2\big) \in \tilde{G}^{(n)}{\times}\tilde{G}^{(n)} : \big|f^{(n)}_1(f^{(n)}_2(0))\big|_{\st}<{\textstyle\frac{\pi}2}\big\} \,\, .$$ Then, given $\big(\tilde{f}^{(n)}_1,\tilde{f}^{(n)}_2\big) \in \eD(\tilde{G}^{(n)})$, we consider a neighborhood $V$ of $0 \in B^m$ such that $$V \subset B_{\st}\big(0,\pi{-}|f_2^{(n)}(0)|_{\st}\big) \,\, , \quad f_2(V) \subset B_{\st}\big(0,\pi{-}|f_1^{(n)}(0)|_{\st}\big)$$ and we set $f^{(n)}_3 \= f^{(n)}_1 \circ (f^{(n)}_2|_V)$. Then, we consider the analytic extension of $f^{(n)}_3$ as above and we set $\tilde{\nu}^{(n)}\big(\tilde{f}^{(n)}_1,\tilde{f}^{(n)}_2\big) \= \tilde{f}^{(n)}_3$. In the same fashion, the inversion map \beq \tilde{\jmath}^{(n)}: \tilde{G}^{(n)} \rar \tilde{G}^{(n)} \label{jmath(n)} \eeq is defined by choosing, for any given $\tilde{f}^{(n)} \in \tilde{G}^{(n)}$, a neighborhood $V$ of $0 \in B^m$ such that $$V \subset B_{\st}\big(0,\pi{-}|f^{(n)}(0)|_{\st}\big) \,\, , \quad 0 \in f^{(n)}(V)$$ and defining $f'{}^{(n)} \= (f^{(n)}|_{V})^{-1}$. Then, we consider the analytic extension of $f'{}^{(n)}$ and we set $\tilde{\jmath}^{(n)}(\tilde{f}^{(n)})\=\tilde{f'}{}^{(n)}$.

We stress the fact that any $\tilde{f}^{(n)} \in \tilde{G}^{(n)}$ verifies \beq \tilde{f}^{(n)}(x) = \big(E^{(n)}_{\tilde{f}^{(n)}(p)} \circ \big(d\tilde{f}^{(n)}|_p\big) \circ L^{(n)}_p\big)(x) \,\, , \quad x \in \Dom(\tilde{f}^{(n)}) \cap \eB_{g^{(n)}}\big(p,\pi-\tfrac1{2^n}-|\phi^{(n)}(p)|\big) \label{f-exp(n)} \eeq for any $p \in \eB_{g^{(\infty)}}(o,\frac{\pi}2)$ in $M^{(\infty)}$.

Let us consider now a dense and countable subset $S \subset \eB_{g^{(\infty)}}(o,\frac{\pi}2)$. By combining the Ascoli-Arzel{\aac} Theorem with a Cantor diagonal procedure (see Step 1 of the proof of \cite[Thm 6.6]{Heb}), by passing to a subsequence, the following claim holds true: for any $p \in S$, there exists a sequence $\tilde{f}^{(n)} \in \tilde{G}^{(n)}$ which converges in the $\cC^1$-topology to a $g^{(\infty)}$-isometry $\tilde{f}^{(\infty)}: \eB_{g^{(\infty)}}(o,\pi{-}\td_{g^{(\infty)}}(o,p)) \rar \eB_{g^{(\infty)}}(p,\pi{-}\td_{g^{(\infty)}}(o,p))$, with $\tilde{f}^{(\infty)}(o)=p$. For the sake of brevity, we just write $\tilde{f}^{(\infty)}=\lim_{\cC^1}\tilde{f}^{(n)}$ and \beq \Dom(\tilde{f}^{(\infty)}) \= \eB_{g^{(\infty)}}\big(o,\pi{-}\td_{g^{(\infty)}}(o,\tilde{f}^{(\infty)}(o))\big) \,\, . \label{dom} \eeq Let us set $$\tilde{G}^{(\infty)}\=\big\{\tilde{f}^{(\infty)}: \text{ there exists a sequence $\tilde{f}^{(n)} \in \tilde{G}^{(n)}$ s.t. $\tilde{f}^{(\infty)}={\textstyle\lim_{\cC^1}}\tilde{f}^{(n)}$ } \big\} \,\, .$$ Then, $\tilde{G}^{(\infty)}$ can be endowed with a structure of local group in the following way.

Firstly, we define the subset $\eD(\tilde{G}^{(\infty)}) \subset \tilde{G}^{(\infty)} \times \tilde{G}^{(\infty)}$ of those pairs $(\tilde{f}^{(\infty)}_1,\tilde{f}^{(\infty)}_2)$ for which there exist $\tilde{f}^{(n)}_1,\tilde{f}^{(n)}_2 \in \tilde{G}^{(n)}$ such that $\big(\tilde{f}^{(n)}_1,\tilde{f}^{(n)}_2\big) \in \eD(\tilde{G}^{(n)})$ for any $n\in \bN$ large enough and $\tilde{f}^{(\infty)}_i=\lim_{\cC^1}\tilde{f}^{(n)}_i$. Notice that the definition of $\eD(\tilde{G}^{(\infty)})$ does not depend on the choice of the sequences $(\tilde{f}^{(n)}_i)$, $i=1,2$. Then, we set \beq \tilde{\nu}^{(\infty)}: \eD(\tilde{G}^{(\infty)}) \rar \tilde{G}^{(\infty)} \,\, , \quad \tilde{\nu}^{(\infty)}(\tilde{f}^{(\infty)}_1,\tilde{f}^{(\infty)}_2) \= {\textstyle\lim_{\cC^1}}\tilde{\nu}^{(n)}(\tilde{f}^{(n)}_1,\tilde{f}^{(n)}_2) \,\, , \label{tildenu}\eeq where $\tilde{\nu}^{(n)}$ was defined in \eqref{nu(n)}. Analogously, we set \beq \tilde{\jmath}^{(\infty)}: \tilde{G}^{(\infty)} \rar \tilde{G}^{(\infty)} \,\, , \quad \tilde{\jmath}^{(\infty)}(\tilde{f}^{(\infty)}) \= {\textstyle\lim_{\cC^1}}\tilde{\jmath}^{(n)}(\tilde{f}^{(n)}) \,\, , \label{tildejmath} \eeq where $\tilde{\jmath}^{(n)}$ was defined in \eqref{jmath(n)}. One can directly check that both \eqref{tildenu} and \eqref{tildejmath} are well defined.

From Proposition \ref{proplimitExp} and \eqref{f-exp(n)} it follows that any $\tilde{f}^{(\infty)} \in \tilde{G}^{(\infty)}$ verifies \beq \tilde{f}^{(\infty)}(x) = \big(E^{(\infty)}_{\tilde{f}(p)} \circ \big(d\tilde{f}^{(\infty)}|_p\big) \circ L^{(\infty)}_p\big)(x) \,\, , \quad x \in \Dom(\tilde{f}^{(n)}) \cap \eB_{g^{(\infty)}}\big(p,\pi-\td_{g^{(\infty)}}(o,p)\big)  \label{f-exp} \eeq for any $p \in \eB_{g^{(\infty)}}(o,\frac{\pi}2)$ in $M^{(\infty)}$.

Let us consider now the map \beq \s: \tilde{G}^{(\infty)} \rar {\rm O}_{g^{(\infty)}}\big(M^{(\infty)}\big) \,\, , \quad \s(\tilde{f}^{(\infty)}) \= \big(\tilde{f}^{(\infty)}(o),d\tilde{f}^{(\infty)}|_o\big) \,\, .\label{sigma} \eeq From \eqref{f-exp} it comes directly that $\s$ is injective. Then, we denote by $G^{(\infty)}$ the topological closure of $\s(\tilde{G}^{(\infty)})$ inside ${\rm O}_{g^{(\infty)}}\big(M^{(\infty)}\big)$. One can define a structure of local topological group of $G^{(\infty)}$ by defining an open set $\eD(G^{(\infty)}) \subset G^{(\infty)}\times G^{(\infty)}$ and two maps $\nu^{(\infty)}: \eD(G^{(\infty)}) \rar G^{(\infty)}$, $\jmath^{(\infty)}: G^{(\infty)} \rar G^{(\infty)}$ by extending \eqref{tildenu}, \eqref{tildejmath} in the following way.

First, let $\eD(G^{(\infty)})$ be the subset of those pairs $((p_1,a_1),(p_2,a_2)) \in G^{(\infty)} \times G^{(\infty)}$ for which there exist $(\tilde{f}^{(\infty)}_{1,k}),(\tilde{f}^{(\infty)}_{2,k}) \subset \tilde{G}^{(\infty)}$ such that $\big(\tilde{f}^{(\infty)}_{1,k},\tilde{f}^{(\infty)}_{2,k}\big) \in \eD(\tilde{G}^{(\infty)})$ for any $k\in \bN$ large enough and $(p_i,a_i)=\lim_{k\rar+\infty}\s(\tilde{f}^{(\infty)}_{i,k})$. Then, we define \beq \nu^{(\infty)}: \eD(G^{(\infty)}) \rar G^{(\infty)} \,\, , \quad \nu^{(\infty)}((p_1,a_1),(p_2,a_2)) \= \lim_{k\rar+\infty}\s\big(\tilde{\nu}^{(\infty)}(\tilde{f}^{(\infty)}_{1,k},\tilde{f}^{(\infty)}_{2,k})\big) \,\, . \label{nu} \eeq Notice that, if we set $p_{i,k} \=\tilde{f}^{(\infty)}_{i,k}(o)$ and $a_{i,k} \= d\tilde{f}^{(\infty)}_{i,k}|_o$ we get \begin{align*}
\tilde{\nu}^{(\infty)}(\tilde{f^{(\infty)}}_{1,k},\tilde{f}^{(\infty)}_{2,k})(o) &= \big(E^{(\infty)}_{p_{1,k}} \circ a_{1,k} \circ L^{(\infty)}_o \circ E^{(\infty)}_{p_{2,k}} \circ a_{2,k}\big)(0_o) \,\, , \\
d\big(\tilde{\nu}^{(\infty)}(\tilde{f}^{(\infty)}_{1,k},\tilde{f}^{(\infty)}_{2,k})\big)|_o &= L^{(\infty)}_{(E^{(\infty)}_{p_{1,k}} \circ a_{1,k} \circ L^{(\infty)}_o \circ E^{(\infty)}_{p_{2,k}} \circ a_{2,k})(0_o)} \circ E^{(\infty)}_{p_{1,k}} \circ a_{1,k} \circ L^{(\infty)}_o{} \circ E^{(\infty)}_{p_{2,k}} \circ a_{2,k}
\end{align*} and hence the limit \eqref{nu} exists, it does not depend on the choice of $(\tilde{f}^{(\infty)}_{1,k},\tilde{f}^{(\infty)}_{2,k})$ and the map $\nu^{(\infty)}$ is continuous. Analogously, we define \beq \jmath^{(\infty)}: G^{(\infty)} \rar G^{(\infty)} \,\, , \quad \jmath^{(\infty)}(p,a) \= \lim_{k\rar+\infty}\s\big(\tilde{\jmath}^{(\infty)}(\tilde{f}^{(\infty)}_k)\big) \,\, , \label{jmath} \eeq where $(\tilde{f}^{(\infty)}_k) \subset \tilde{G}^{(\infty)}$ is an arbitrary sequence such that $(p,a)=\lim_{k\rar+\infty}\sigma(\tilde{f}^{(\infty)}_k)$. Then, setting $p_k \= \tilde{f}^{(\infty)}_k(o)$ and $a_k \= d\tilde{f}^{(\infty)}|_o$ we get \begin{align*}
\tilde{\jmath}(\tilde{f}^{(\infty)}_k)(o) &= \big(E^{(\infty)}_o \circ a_k^{-1} \circ L^{(\infty)}_{p_k}\big)(o) \,\, , \\
d\big(\tilde{\jmath}^{(\infty)}(\tilde{f}^{(\infty)}_k)\big)|_o &=L^{(\infty)}_{(E^{(\infty)}_o \circ a_k^{-1} \circ L^{(\infty)}_{p_k})(o)} \circ E^{(\infty)}_o \circ a_k^{-1} \circ L^{(\infty)}_{p_k} \circ E^{(\infty)}_o
\end{align*} and hence the limit \eqref{jmath} exists, it does not depend on the choice of $(\tilde{f}^{(\infty)}_k)$ and the map $\jmath^{(\infty)}$ is continuous. Finally, we define the open set \beq \eW^{(\infty)} \= \bigcup_{(p,a) \in G^{(\infty)}} \{(p,a)\} \times \eB_{g^{(\infty)}}(o,\pi-\td_{g^{(\infty)}}(o,p)) \subset G^{(\infty)} \times M^{(\infty)} \label{W}\eeq and the map \beq \Q^{(\infty)}: \eW^{(\infty)} \rar M^{(\infty)} \,\, , \quad \Q^{(\infty)}((p,a),x) \= \lim_{k\rar+\infty} \tilde{f}^{(\infty)}_k(x) \,\, , \label{Q} \eeq where $(\tilde{f}^{(\infty)}_k) \subset \tilde{G}^{(\infty)}$ is an arbitrary sequence such that $(p,a)=\lim_{k\rar+\infty}\sigma(\tilde{f}^{(\infty)}_k)$. Again, the limit \eqref{Q} exists, it does not depend on $(\tilde{f}^{(\infty)}_k)$ and the map $\Q^{(\infty)}$ is continuous. From now on, we identify any pair $(p,a) \in G^{(\infty)}$ with the corresponding map $f^{(\infty)} \= \Q^{(\infty)}((p,a),\cdot)$ and we just write ``$f^{(\infty)} \in G^{(\infty)}$ ''. Notice that the map $f^{(\infty)} \in G^{(\infty)}$ corresponding to a pair $(p,a)$ is given by \beq f^{(\infty)} = E^{(\infty)}_p \circ a \circ L^{(\infty)}_o\big|_{\Dom(f^{(\infty)})} \,\, , \quad \Dom(f^{(\infty)}) \= \eB_{g^{(\infty)}}\big(o,\pi{-}\td_{g^{(\infty)}}(o,p)\big) \,\, . \label{finG} \eeq

From \eqref{Q} it follows that each map $f^{(\infty)} \in G^{(\infty)}$ is an isometry, and hence $G^{(\infty)}$ is a local topological group of isometries on $(M^{(\infty)},g^{(\infty)},o)$. Since it is a closed subset of ${\rm O}_{g^{(\infty)}}\big(M^{(\infty)}\big)$, it is locally compact. Moreover, by \eqref{finG} it is effective. Finally, since by construction the orbit $G^{(\infty)}(o)$ contains the whole ball $\eB_{g^{(\infty)}}(o,\frac{\pi}2)$, we conclude that $G^{(\infty)}$ is transitive. \end{proof}

Finally, we are ready to prove Theorem \ref{thmcomp}.

\begin{proof}[Proof of Theorem \ref{thmcomp}] Since the elements of the sequence $(B^m,\hat{g}^{(n)})$ are locally homogeneous, the limit space $\big(M^{(\infty)},g^{(\infty)}\big)$ is locally homogeneous (see \cite[proof of Thm 1.6]{BLS}). Hence, by Theorem \ref{lochomreg} and Theorem \ref{limitG} it follows that $\big(M^{(\infty)},g^{(\infty)}\big)$ is a locally homogeneous $\cC^{\w}$-Riemannian manifold. Since convergence in pointed $\cC^{1,\a}$-topology implies convergence in pointed Gromov-Hausdorff topology, it holds that $\sec(g^{(\infty)})\geq-1$. Moreover, by \cite[Thm. 6.4.8]{Pet} we get a uniform positive lower bound on the convexity radius ${\rm conv}_{0}(B^m,\hat{g}^{(n)})$ along the sequence and hence by \cite[Thm. 5.1]{BS} we get $\sec(g^{(\infty)})\leq1$. Furthermore by \cite[Lemma 1.5]{Sak} it necessarily holds that $E^{(\infty)}=\Exp(g^{(\infty)})$ (see also the proof of \cite[Thm 4.4]{Pe}). Therefore, fixing a $g^{(\infty)}$-orthonormal frame $u: \bR^m \rar T_oM$, one can consider the pulled-back metric $\hat{g}^{(\infty)}\= (\Exp(g^{(\infty)})_o \circ u)^*g^{(\infty)}$ on $B^m = B_{\st}(0,\pi) \subset \bR^m$. It is easy to realize that $(B^m,\hat{g}^{(\infty)})$ is a geometric model and, since it is isometric to $(M^{(\infty)},g^{(\infty)})$, we conclude that $(B^m,\hat{g}^{(n)})$ converges in the pointed $\cC^{1,\a}$-topology to $(B^m,\hat{g}^{(\infty)})$. \end{proof}

From Theorem \ref{thmcomp} we also get the following

\begin{corollary} Let $(B^m,\hat{g}^{(n)})$ be a sequence of geometric models and assume that there exist an integer $k\geq0$ and a constant $C>0$ such that $$\sum_{i=0}^k\big|(\n^{\hat{g}^{(n)}})^i\Rm(\hat{g}^{(n)})\big|_{\hat{g}^{(n)}} \leq C \,\, . $$ Then, for any $0<\a<1$, the sequence $(B^m,\hat{g}^{(n)})$ subconverges to a limit geometric model $(B^m,\hat{g}^{(\infty)})$ in the pointed $\cC^{k+1,\a}$-topology. \label{corcomp} \end{corollary}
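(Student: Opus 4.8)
The plan for Corollary \ref{corcomp} is to upgrade the pointed $\cC^{1,\a}$-subconvergence provided by Theorem \ref{thmcomp} to pointed $\cC^{k+1,\a}$-subconvergence, the only genuinely new ingredient being a uniform local elliptic regularity estimate in harmonic coordinates. First I would pass to the subsequence furnished by Theorem \ref{thmcomp}, so that a limit geometric model $(B^m,\hat{g}^{(\infty)})$ is already available; in particular $\hat{g}^{(\infty)}$ is a smooth, real-analytic, locally homogeneous metric on $B^m$. It then suffices to prove that this subsequence subconverges in the pointed $\cC^{k+1,\a}$-topology: its limit is forced to be $(B^m,\hat{g}^{(\infty)})$ by uniqueness of pointed limits, and that is already a geometric model.

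The heart of the argument is the following uniform estimate. I would fix $0<r<\pi$ and work over the sub-ball $B_{\st}(0,r)$. By \eqref{inj}, for every $x\in\overline{B_{\st}(0,r)}$ and every $n$ one has $\inj_x(B^m,\hat{g}^{(n)})=\pi-|x|_{\st}\geq\pi-r>0$, while $\eB_{\hat{g}^{(n)}}(x,\pi-r')\subset\subset B^m$ for each $r<r'<\pi$; thus, along the whole sequence, both the injectivity radius and the distance to the boundary are bounded below away from $0$ on $\overline{B_{\st}(0,r)}$. Together with $|\sec(\hat{g}^{(n)})|\leq1$ this yields, via the harmonic radius estimate (cf. \cite[Sec 2]{And} and Definition \ref{harmcoord}), a uniform lower bound $\rho_0=\rho_0(m,\a,r)>0$ on the $\cC^{1,\a}$-harmonic radius of $\hat{g}^{(n)}$ at such $x$. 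In a harmonic chart centered at $x$ the metric coefficients solve the quasilinear elliptic system $g^{pq}\p_p\p_q g_{ij}=-2\Ric_{ij}+\cQ_{ij}(g,\p g)$, with $\cQ_{ij}$ universal and quadratic in $\p g$, which by the chart conditions of Definition \ref{harmcoord} is uniformly elliptic with $\cC^{0,\a}$-bounded coefficients. From here I would run the standard bootstrap: in the $s$-th round ($1\leq s\leq k$), the $\cC^{s,\a}$-control on $g$ already obtained, combined with the hypothesis bound on $\big|(\n^{\hat{g}^{(n)}})^{s}\Rm(\hat{g}^{(n)})\big|$ (and all lower orders), controls $\Ric$ in $\cC^{s-1,\a}$ in these coordinates, and the interior Schauder estimate \cite[Thm 6.2]{GT} promotes $g$ to $\cC^{s+1,\a}$ on a slightly smaller ball; after $k$ rounds one reaches, on a ball of definite radius $\rho_1\in(0,\rho_0)$ depending only on $m,k,\a,r,C$, a bound on the $\cC^{k+1,\a}$-norm of the metric coefficients that is uniform in $n$ and in the center $x$.

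With these uniform harmonic-coordinate bounds at hand, the conclusion follows from the usual packaging argument. Transition maps between overlapping $\cC^{1,\a}$-harmonic charts are $\hat{g}^{(n)}$-harmonic, hence uniformly bounded in $\cC^{k+2,\a}$ by the same Schauder estimate; choosing a countable dense set of centers and combining the Ascoli-Arzel{\aac} theorem with a Cantor diagonal procedure — exactly as in the proof of Theorem \ref{thmcomp} and of \cite[Thm 15]{HH} — yields pointed $\cC^{k+1,\a}$-subconvergence of $(B_{\st}(0,r),\hat{g}^{(n)})$. Exhausting $B^m$ by the balls $B_{\st}(0,r)$ as $r\uparrow\pi$ and diagonalizing once more produces a subsequence of $(B^m,\hat{g}^{(n)})$ converging in the pointed $\cC^{k+1,\a}$-topology, whose limit is $(B^m,\hat{g}^{(\infty)})$ by the first paragraph. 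The one point I expect to require genuine care — and the reason this is a refinement of Theorem \ref{thmcomp} rather than a formality — is keeping every constant uniform up to the boundary of the incomplete model: one must systematically work on the strictly smaller sub-balls $B_{\st}(0,r)$, $r<\pi$, and exploit \eqref{inj} to prevent the injectivity radius, the harmonic radius and the Schauder constants from degenerating along the sequence. Everything beyond that is the standard elliptic bootstrap, so I do not anticipate any further obstacle.
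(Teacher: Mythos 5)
Your proposal is correct, and its overall strategy coincides with the paper's: start from the $\cC^{1,\a}$-subconvergence of Theorem \ref{thmcomp}, use the hypothesis on $\sum_{i\leq k}|(\n^{\hat{g}^{(n)}})^i\Rm(\hat{g}^{(n)})|$ to get a uniform lower bound on the $\cC^{k+1,\a}$-harmonic radius on each sub-ball $\W_{\d}=B_{\st}(0,\pi-\d)$, extract a higher-order convergent subsequence there, identify the limit with $(B^m,\hat{g}^{(\infty)})$, and exhaust $B^m$ by letting $\d\to0$. The two write-ups diverge only in execution. First, where you carry out the elliptic bootstrap by hand (the equation $g^{pq}\p_p\p_q g_{ij}=-2\Ric_{ij}+\cQ_{ij}(g,\p g)$ in harmonic coordinates, iterated Schauder estimates), the paper simply invokes \cite[Thm 6]{HH} to obtain ${\rm har}^{k+1,\a}_x(B^m,\hat{g}^{(n)})\geq r_{\zero}>0$ on $\W_{\d}$; your bootstrap is exactly the content of that citation, and your bookkeeping (needing $|\n^{s}\Rm|$ bounded to get $\Ric$ in $\cC^{s-1,\a}$ in coordinates, hence $g$ in $\cC^{s+1,\a}$) is consistent with the hypothesis running up to $i=k$. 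Second, the packaging of the charts into a convergent subsequence is genuinely different: you glue harmonic charts directly, controlling the (harmonic) transition maps in $\cC^{k+2,\a}$ and applying Ascoli--Arzel\`a with a diagonal argument, whereas the paper follows Kasue \cite{Ka} and Anderson \cite[Lemma 2.1]{And}, embedding $\W_{\d}$ into some $\bR^N$ as graphs uniformly bounded in $\cC^{k+2,\a}$, letting the images converge as submanifolds, and pulling back along normal projections. Both devices are standard and equivalent here; your route avoids the auxiliary Euclidean embedding at the cost of having to track the transition-map estimates explicitly, while the paper's route outsources both the regularity and the gluing to well-documented references, which makes for a shorter proof but hides the mechanism you have usefully made explicit.
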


\begin{proof} By means of Theorem \ref{thmcomp}, we can assume that $(B^m,\hat{g}^{(n)})$ converges to a geometric model $(B^m,\hat{g}^{(\infty)})$ in the pointed $\cC^{1,\a}$-topology as $n \rar +\infty$. Fix $0 <\d < \pi$ and set $\W_{\d} \= B_{\st}(0,\pi-\d) \subset B^m$. Then (see e.g. \cite[Thm 6]{HH}) it follows that $${\rm har}^{k+1,\a}_x\big(B^m,\hat{g}^{(n)}\big)\geq r_{\zero} >0 \quad \text{ for any $x \in \W_{\d}$ }$$ (see Definition \ref{harmcoord}). Following \cite[proof of Thm A]{Ka} and \cite[Lemma 2.1]{And}, it is possible to construct, up to passing to a subsequence, smooth embeddings $\psi^{(n)}: \W_{\d} \rar \bR^N$, for some $N>>m$, such that $\psi^{(n)}(0)=0$ and $\W^{(n)}_{\d} \= \psi^{(n)}(\W_{\d}) \subset \bR^N$ are locally represented as graphs of smooth functions over $B_{\st}\big(0,\tfrac{r_{\zero}}2\big) \subset \bR^m$ uniformly bounded in $\cC^{k+2,\a}\big(\overline{B_{\st}\big(0,\tfrac{r_{\zero}}2\big)}\big)$. Then, by passing to a subsequence, $\W^{(n)}_{\d}$ converges in the $\cC^{k+2,\a}$-topology as submanifolds of $\bR^N$ to an embedded $\cC^{k+2,\a}$-submanifold $\W^{(\infty)}_{\d}$ of $\bR^N$. Set $g^{(n)} \=((\psi^{(n)})^{-1})^*\hat{g}^{(n)}$. By passing to a further subsequence, the projection along the normals of $\W^{(\infty)}_{\d}$ induces a sequences of $\cC^{k+2,\a}$-diffeomorphisms ${\rm pr}^{(n)}:\W^{(n)}_{\d} \rar \W^{(\infty)}_{\d}$ such that the metrics $(({\rm pr}^{(n)})^{-1})^*g^{(n)}$ converge in the $\cC^{k+1,\a}$-topology to a $\cC^{k+1,\a}$-Riemannian metric $g^{(\infty)}$ on $\W^{(\infty)}_{\d}$.

Since we have assumed that $(B^m,\hat{g}^{(n)})$ converges to $(B^m,\hat{g}^{(\infty)})$ in the pointed $\cC^{1,\a}$-topology, it follows that there exists an isometric embedding $\f: (\W^{(\infty)}_{\d},g^{(\infty)}) \rar (B^m,\hat{g}^{(\infty)})$ with $\f(0)=0$. Moreover, given any compact set $K \subset B^m$, one can take $\d>0$ small enough in such a way that $K \subset \f\big(\W^{(\infty)}_{\d}\big)$. This completes the proof. \end{proof}

\section{Existence of geometric models} \label{exsec} \setcounter{equation} 0

Let $(M^m,g)$ be a locally homogeneous space. By Section \ref{Nomizualg}, we can fix a distinguished point $p \in M$ and consider the Nomizu algebra $\mathfrak{kill}^g=\{ \text{ Killing generators $(v,A)$ at $p$ } \}$ of $(M,g,p)$. Consider the Euclidean scalar product on $\mathfrak{kill}^g$ given by $$\la\!\la(v,A),(w,B)\ra\!\ra_g\=g_p(v,w)-\Tr(AB) \,\, ,$$ set $\mathfrak{kill}^g_0\=\{(0,A) \in \mathfrak{kill}^g\}$ and let $\gm$ be the $\la\!\la\,,\ra\!\ra_g$-orthogonal complement of $\mathfrak{kill}^g_0$ in $\mathfrak{kill}^g$. Since $(M,g)$ is locally homogeneous, the projection $(v,A) \mapsto v$ gives rise to a linear isomorphism $\gm \simeq T_pM$, which allows us to define a scalar product $\la\,,\ra_g$ on $\gm$ induced by the metric tensor $g$. An {\it adapted frame for $\mathfrak{kill}^g$} is a basis $u=(e_1,{\dots},e_{q+m}): \bR^{q+m} \rar \mathfrak{kill}^g$ such that $\mathfrak{kill}^g_0=\vspan(e_1,{\dots},e_q)$, $\gm=\vspan(e_{q+1},{\dots},e_{q+m})$ and $\la e_{q+i},e_{q+j}\ra_g=\d_{ij}$, with $q \= \dim \mathfrak{kill}^g_0$. Given an adapted frame $u$ for $\mathfrak{kill}^g$, we will denote the induced orthonormal frame for $(M,g)$ at $p$ by $u|_{\bR^m}: \bR^m \rar \gm = T_pM$.

\begin{definition} A sequence of $m$-dimensional locally homogeneous spaces $(M^{(n)},g^{(n)})$ {\it converges algebraically} to a locally homogeneous space $(M^{(\infty)},g^{(\infty)})$ if $\dim M^{(\infty)}=m$, $\dim\big(\mathfrak{kill}^{g^{(n)}}_0\big)\equiv\dim\big(\mathfrak{kill}^{g^{(\infty)}}_0\big)$ and there exist distinguished points $p^{(n)} \in M^{(n)}$, $p^{(\infty)} \in M^{(\infty)}$ together with adapted frames $u^{(n)}$, $u^{(\infty)}$ for the Nomizu algebras $\mathfrak{kill}^{g^{(n)}}$, $\mathfrak{kill}^{g^{(\infty)}}$ of $(M^{(n)},g^{(n)},p^{(n)})$, $(M^{(\infty)},g^{(\infty)},p^{(\infty)})$, respectively, such that the structure constants of the Lie algebra $\mathfrak{kill}^{g^{(n)}}$ with respect to $u^{(n)}$ converge to those of $\mathfrak{kill}^{g^{(\infty)}}$ with respect to $u^{(\infty)}$. \label{algconv} \end{definition}

Notice that the notion of algebraic convergence does not depend on the choices either of the base points $p^{(n)}$ and $p^{(\infty)}$, or of the adapted frames $u^{(n)}$ and $u^{(\infty)}$. Moreover, the following facts hold true.

\begin{fact} If $(M^{(n)},g^{(n)})$ converges algebraically to $(M^{(\infty)},g^{(\infty)})$ and $p^{(n)}$, $p^{(\infty)}$, $u^{(n)}$, $u^{(\infty)}$ are taken as in Definition \ref{algconv}, then for any integer $k \geq 0$ $$\big(u^{(n)}|_{\bR^m}\big)^*\Big(\big(\n^{g^{(n)}}\big)^k\Rm(g^{(n)}) \big|_{p^{(n)}}\Big) \,\,\longrightarrow\,\, \big(u^{(\infty)}|_{\bR^m}\big)^*\Big(\big(\n^{g^{(\infty)}}\big)^k\Rm(g^{(\infty)}) \big|_{p^{(\infty)}}\Big) \quad \text{ as $n \rar +\infty$ } $$ in the standard Euclidean topology of $\bigotimes^k(\bR^m)^*\otimes\L^2(\bR^m)^*\otimes\so(m)$. \label{fact1} \end{fact}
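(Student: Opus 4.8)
The plan is to show that algebraic convergence of $(M^{(n)},g^{(n)})$ to $(M^{(\infty)},g^{(\infty)})$ forces pointwise convergence of all covariant derivatives of curvature, once these are read off in a frame induced by the adapted frame for the Nomizu algebra. The key point is that the iterated covariant derivatives $\big(\n^{g}\big)^k\Rm(g)_p$ are themselves \emph{algebraic invariants} of the Nomizu algebra $\mathfrak{kill}^g$ together with the reductive splitting $\mathfrak{kill}^g=\mathfrak{kill}^g_0\oplus\gm$. Concretely, I would first recall the standard fact (Nomizu's construction, as cited before the statement of Theorem~\ref{lochomreg}) that on a locally homogeneous space the curvature tensor and its covariant derivatives at $p$ are determined by the bracket structure of $\mathfrak{kill}^g$: the isotropy $\mathfrak{kill}^g_0$ acts on $\gm\simeq T_pM$, and $\Rm(g)_p(v\wedge w)$ equals the $\mathfrak{kill}^g_0$-component of $[(v,0),(w,0)]$ (up to the explicit correction given by the bracket formula displayed in Section~\ref{Nomizualg}), while the Levi-Civita connection $\n^g$ on invariant tensors is expressed through the Nomizu map $\L\colon\gm\to\End(\gm)$, $\L(v)w=-\tfrac12[v,w]_\gm+U(v,w)$, with $U$ the symmetric bilinear map built from $\la\,,\ra_g$ and the brackets. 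All of these are polynomial expressions in the structure constants of $\mathfrak{kill}^g$ in an adapted frame.

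Next I would make this precise by exhibiting, for each $k$, an explicit universal polynomial map $P_k$ such that
$$
\big(u|_{\bR^m}\big)^*\Big(\big(\n^{g}\big)^k\Rm(g)_p\Big) \;=\; P_k\big(c^a_{bc}\big),
$$
where $c^a_{bc}$ are the structure constants of $\mathfrak{kill}^g$ in the adapted frame $u$, and $P_k$ does not depend on the particular locally homogeneous space. This is obtained by induction on $k$: the base case $k=0$ is the curvature formula above, and the inductive step uses that $\n^g$ acts on an invariant tensor field by a first-order operator whose coefficients are, again, the Nomizu map $\L$ applied componentwise, hence polynomial in the $c^a_{bc}$. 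One should be slightly careful that the adapted-frame normalization ($\la e_{q+i},e_{q+j}\ra_g=\d_{ij}$ and $\mathfrak{kill}^g_0=\vspan(e_1,\dots,e_q)$) is exactly what makes the identification $\gm\simeq T_pM$ an isometry onto the orthonormal frame $u|_{\bR^m}$, so that the frame components of $\Rm$ and its derivatives are literally the output of $P_k$ with no metric-dependent fudge factors. Granting this, the conclusion is immediate: by Definition~\ref{algconv} the structure constants of $\mathfrak{kill}^{g^{(n)}}$ in $u^{(n)}$ converge to those of $\mathfrak{kill}^{g^{(\infty)}}$ in $u^{(\infty)}$, and since $P_k$ is continuous (indeed polynomial), applying $P_k$ to both sides yields exactly the claimed convergence in $\bigotimes^k(\bR^m)^*\otimes\L^2(\bR^m)^*\otimes\so(m)$.

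The main obstacle is the bookkeeping in the inductive step: one must verify that the formula for $\big(\n^g\big)^{k+1}\Rm$ in terms of $\big(\n^g\big)^{k}\Rm$ and the Nomizu data is genuinely universal and polynomial, i.e. that no division by quantities that could degenerate in the limit enters. This is where the reductive normalization pays off — because $\la\,,\ra_g$ is the fixed Euclidean structure on $\gm$ under the adapted-frame identification, the symmetric part $U$ of the Nomizu map is given by the usual formula $2\la U(v,w),z\ra = \la[z,v]_\gm,w\ra+\la v,[z,w]_\gm\ra$, which is linear in the structure constants with no denominators. Once this is in hand the argument is purely formal. Alternatively, and perhaps more cleanly for the write-up, one can invoke Nomizu's reconstruction theorem to say that algebraic convergence produces, after passing to the local Lie group of isometries, a $\cC^\infty$-convergence of the homogeneous metrics on a fixed normal ball (this is essentially the content of Proposition~\ref{MAIN-C}, but one should be careful to avoid circularity), and then Fact~\ref{fact1} follows from smooth convergence together with continuity of the curvature operators; however the direct polynomial argument above is self-contained and is the route I would take.
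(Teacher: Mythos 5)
You should first be aware that the paper does not actually prove Fact \ref{fact1}: immediately after stating Facts \ref{fact1} and \ref{fact2} the author remarks that they are known in the literature (citing \cite{Ts}) and defers the proofs to the forthcoming paper \cite{Ped3}. So there is no in-paper argument to compare yours against, and your proposal has to be judged on its own.

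On its own terms, your strategy is the standard one and is essentially sound: the Nomizu data $(\mathfrak{kill}^g,\mathfrak{kill}^g_0,\gm,\la\,\cdot,\cdot\ra_g)$, read off in an adapted frame, determines $\big(\n^{g}\big)^k\Rm(g)_p$ through universal polynomial expressions in the structure constants, and convergence of the structure constants then gives the claim by continuity. Two points deserve more care than you give them. First, your base case is stated loosely: elements of $\gm$ are pairs $(v,A)$ with $A$ in general nonzero, so $\Rm(g)_p(v\wedge w)$ is \emph{not} the $\mathfrak{kill}^g_0$-component of a bracket of elements of the form $(v,0)$; one must either use the curvature formula for the invariant metric on the local quotient reconstructed from $(\mathfrak{kill}^g,\mathfrak{kill}^g_0,\gm,\la\,\cdot,\cdot\ra_g)$, or show that the $\so(T_pM,g_p)$-components of the adapted frame vectors spanning $\gm$ are themselves recoverable (polynomially) from the structure constants. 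Second, and relatedly, the inductive step via the identity $v\,\lrcorner\,\big((\n^g)^{k+1}\Rm(g)_p\big)=-A\cdot\big((\n^g)^{k}\Rm(g)_p\big)$ from \eqref{killgen} requires control of exactly these $A$-components. The clean way to obtain both is the reconstruction theorem (Nomizu \cite{No}, Tricerri--Tsukada--Spiro): the abstract data determine a reductive local homogeneous space locally isometric to $(M,g)$ near $p$, on which the Nomizu map $\L$ and the curvature are given by the universal formulas you quote. You implicitly invoke this, but it is the real content of the proof and should be stated as such rather than folded into ``bookkeeping''. Your observation that the alternative route through Proposition \ref{MAIN-C} would be circular is correct; the direct Lie-algebraic argument is the right one.
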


\begin{fact} For any locally homogeneous space $(M,g)$, there exists a sequence $(M^{(n)},g^{(n)})$ of globally homogeneous spaces which converges algebraically to $(M,g)$. \label{fact2} \end{fact}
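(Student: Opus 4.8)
The statement is purely algebraic -- it is, in essence, Lauret's density theorem for homogeneous spaces (\cite{Lau1,Lau2,Lau3}, \cite{BL}) -- and the plan is to reduce it to a rationalization argument for the isotropy subalgebra of the Nomizu algebra. First I would fix $p\in M$ and pass to the algebraic model: $\gg\=\mathfrak{kill}^g$, $\gh\=\mathfrak{kill}^g_0$, with the reductive decomposition $\gg=\gh\oplus\gm$ and the $\Ad$-invariant scalar product $\la\,,\ra$ on $\gm\simeq T_pM$ induced by $g$. Note that the linear isotropy representation $\gh\to\so(\gm,\la\,,\ra)$, which sends $(0,A)$ to the endomorphism $w\mapsto A(w)$, is faithful directly from the bracket of $\mathfrak{kill}^g$; hence $\gh$ is the Lie algebra of a compact subgroup of $\SO(\gm)$ and contains no nonzero ideal of $\gg$. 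Recalling (Section \ref{Nomizualg}, \cite{No,Pa}) that every such triple $(\gg,\gh,\la\,,\ra)$ is the datum of a germ of locally homogeneous space, that triples with the same iterated curvature tensors at the origin give locally isometric germs, and that the triple comes from a \emph{globally} homogeneous space exactly when some Lie group with Lie algebra $\gg$ has a \emph{closed} subgroup with Lie algebra $\gh$, Fact \ref{fact1} reduces the problem to producing triples $(\gg^{(n)},\gh^{(n)},\la\,,\ra^{(n)})$ that are globally realizable and whose structure constants, in adapted frames, converge to those of $(\gg,\gh,\la\,,\ra)$.

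Next I would locate the obstruction and remove it by rationalization. The only failure possible is that the connected subgroup $H\leq\tilde G$ with $\Lie H=\gh$ ($\tilde G$ simply connected, $\Lie\tilde G=\gg$) is not closed; since $\gh$ is compact, $\gh=[\gh,\gh]\oplus\gz(\gh)$ with $[\gh,\gh]$ semisimple of compact type, the subgroup integrating $[\gh,\gh]$ is compact (hence closed), and the non-closedness is confined to the central part: $\overline H=\exp[\gh,\gh]\cdot\overline{\exp\gz(\gh)}$ with $\overline{\exp\gz(\gh)}\cong\bR^a\times T^b$, and $\gz(\gh)$ winds irrationally in the torus factor. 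I would then fix an integral lattice on $\gt\=\Lie T^b$, use that the $\dim\gz(\gh)$-dimensional subspaces of $\bR^a\oplus\gt$ whose torus component is defined over the lattice -- equivalently, those integrating to a closed subgroup of $\tilde G$ -- are dense in the corresponding Grassmannian, and pick such subspaces $\gz^{(n)}\to\gz(\gh)$. Setting $\gh^{(n)}\=[\gh,\gh]\oplus\gz^{(n)}$ gives, for $n$ large, a subalgebra transverse to $\gm$ that integrates to a \emph{closed} subgroup $H^{(n)}\leq\tilde G$; averaging a fixed $\Ad(\overline H)$-invariant background metric on $\gg$ over the now-compact $H^{(n)}$ produces reductive complements $\gm^{(n)}\to\gm$ and $\Ad(H^{(n)})$-invariant scalar products $\la\,,\ra^{(n)}\to\la\,,\ra$. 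Then $(\gg,\gh^{(n)},\la\,,\ra^{(n)})$ is realized by the globally homogeneous space $(M^{(n)},g^{(n)})\=(\tilde G/H^{(n)},\text{ induced metric})$, and choosing adapted frames $u^{(n)}\to u^{(\infty)}$ makes the structure constants of $\gg$ converge, so $(M^{(n)},g^{(n)})$ converges algebraically to $(M,g)$.

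The step I expect to be the main obstacle is making the rationalization legitimate \emph{within the class of realizable infinitesimal models}: one must verify that $\gh^{(n)}$ genuinely integrates to a \emph{closed} subgroup of a Lie group with Lie algebra $\gg$ (which can require passing to a covering of $\tilde G$, or dividing out a discrete central subgroup, and -- if $\overline{\exp(\ad_\gg\gh)}$ is not already inner, so that the torus directions one needs are not available inside $\gg$ -- first enlarging $\gg$ to the semidirect product $\gd\ltimes\gg$ with $\gd\=\Lie\big(\overline{\exp(\ad_\gg\gh)}\big)\subset\Der(\gg)$ and $\gh$ to $\gd\oplus\gh$, a change that leaves the local isometry type untouched but makes the isotropy literally compactly embedded), and that the background metric can be chosen $\Ad(\overline H)$-invariant so that the averaged metrics $\la\,,\ra^{(n)}$ converge to the original $\la\,,\ra$ rather than to some other $\overline H$-invariant metric. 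Both subtleties hinge on the compactness of $\overline{\exp(\ad_\gg\gh)}$ -- which is forced by the skew-symmetry of the isotropy action, i.e. by the invariance of $\la\,,\ra$ -- since this is exactly what keeps the whole perturbation scheme inside a compact group, where closed subgroups of the prescribed dimension approximating any given one are plentiful. Once these points are settled, the remaining continuity statements are routine; I would follow \cite{Lau1} and \cite{BL} for the technical implementation in the cognate setting of homogeneous manifolds.
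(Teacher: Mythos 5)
The paper does not actually contain a proof of Fact \ref{fact2}: it states that Facts \ref{fact1} and \ref{fact2} ``are known in the literature (see e.g.\ \cite{Ts})'' and defers the proofs to the forthcoming paper \cite{Ped3}. So there is no in-paper argument to compare yours against; what follows measures your proposal against the standard Tsukada-style argument that the citation points to, which is indeed the route you take: pass to the infinitesimal model $(\gg,\gh,\la\,,\ra_g)=(\mathfrak{kill}^g,\mathfrak{kill}^g_0,\la\,,\ra_g)$, observe that the only obstruction to global realizability is non-closedness of the connected isotropy subgroup $H\leq\tilde G$, and remove it by rationalizing the central torus directions. Your structural observations are correct: the decomposition $\gg=\gh\oplus\gm$ is reductive, $\ad_\gg\gh$ is skew-symmetric for $\la\!\la\,,\ra\!\ra_g$ (on $\gh$ via $-\Tr(AB)$ and on $\gm$ via the isotropy representation), so $\Ad(H)$ has compact closure; the analytic subgroup integrating $[\gh,\gh]$ is compact by Weyl's theorem; and the defect is confined to $\gz(\gh)$. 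One simplification is available: you do not need the contingency of enlarging $\gg$ by outer derivations. The closure $\overline{H}$ of $H$ in $\tilde G$ is a closed connected subgroup with $\Lie(\overline{H})=[\gh,\gh]\oplus\Lie(\overline{\exp\gz(\gh)})\subseteq\gg$, so the torus in which $\gz(\gh)$ winds is already available inside $\gg$, and rationalizing $\gz^{(n)}$ there produces closed subgroups $H^{(n)}\leq\overline{H}\leq\tilde G$ directly; moreover, since $\la\!\la\,,\ra\!\ra_g$ is already $\ad_\gg\overline{\gh}$-invariant, taking $\gm^{(n)}$ to be its orthogonal complement of $\gh^{(n)}$ gives $\Ad(H^{(n)})$-invariant data converging to $(\gm,\la\,,\ra_g)$ with no averaging needed.

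The one place where your argument does not yet deliver the statement as formalized in Definition \ref{algconv} is this: algebraic convergence is defined via the \emph{full} Nomizu algebras $\mathfrak{kill}^{g^{(n)}}$, together with the requirement $\dim\mathfrak{kill}^{g^{(n)}}_0\equiv\dim\mathfrak{kill}^{g^{(\infty)}}_0$. Your construction controls only the designated transitive pair $(\gg,\gh^{(n)})$; the homogeneous metrics on $\tilde G/H^{(n)}$ may a priori admit strictly more Killing fields than $\gg$ provides, and nothing in the argument forces $\mathfrak{kill}^{g^{(n)}}$ to converge to $\mathfrak{kill}^{g}$, nor the full isotropy dimension to stabilize at $\dim\gh$. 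To close this you must either argue that any excess Killing algebra subconverges and identify the limit with $\mathfrak{kill}^g$ by maximality of the Nomizu algebra, or observe that the weaker ``sub-algebraic'' convergence you obtain already suffices for the only use of Fact \ref{fact2} in this paper, namely feeding Fact \ref{fact1} in the proof of Theorem \ref{MAIN-A}, where what is actually needed is the convergence of $(\n^{g^{(n)}})^k\Rm(g^{(n)})$ at the base points, and this does follow from convergence of the reductive data alone. Apart from this bookkeeping issue, and the very minor slip that the image of $\gh$ in $\so(\gm)$ generates a subgroup of $\SO(\gm)$ that need not itself be closed (only of compact type), the scheme is sound and is the expected one.
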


Fact \ref{fact1} and Fact \ref{fact2} are known in the literature (see e.g. \cite{Ts}). However, for the convenience of the reader, we will provide a proof of these two facts on the forthcoming paper \cite{Ped3}.

\begin{proof}[Proof of Theorem \ref{MAIN-A}] Let $(M,g)$ be a strictly locally homogeneous space with $|\sec(g)|\leq 1$. By Fact \ref{fact2}, there exists a sequence $(M^{(n)},g^{(n)})$ of globally homogeneous spaces which converges algebraically to $(M,g)$. By Fact \ref{fact1} it follows that there exists a sequence $(\e^{(n)}) \subset (0,\pi)$ such that $$\big|\sec(g^{(n)})\big|\leq \frac1{\big(1-\frac{\e^{(n)}}{\pi}\big)^2} \,\, , \quad \e^{(n)}\to0 \,\, .$$ By repeating the same argument as in Remark \ref{remgmHstar}, we can pull back the metric $g^{(n)}$ to the tangent ball $B_{{g^{(n)}}_{p^{(n)}}}(0_{p^{(n)}},\pi-\e^{(n)}) \subset T_{p^{(n)}}M^{(n)}$ of radius $\pi-\e^{(n)}$ at some point $p^{(n)} \in M^{(n)}$ via the Riemannian exponential $\Exp(g^{(n)})_{p^{(n)}}:T_{p^{(n)}}M^{(n)} \rar M^{(n)}$. By Fact \ref{fact1} and Corollary \ref{corcomp} we can pass to a subsequence in such a way that $\big(B_{{g^{(n)}}_{p^{(n)}}}(0_{p^{(n)}},\pi-\e^{(n)}), \Exp(g^{(n)})^*g^{(n)}\big)$ converges in the pointed $\cC^{\infty}$-topology to a geometric model $(B^m,\hat{g}^{(\infty)})$. Finally, since the curvature tensor and all its covariant derivatives at a point $p$ determines a complete set of invariants for real analytic Riemannian manifolds up to local isometry around $p$ (see e.g. \cite[Cor E.III.8]{BGM}), it follows that $(B^m,\hat{g}^{(\infty)})$ is locally isometric to $(M,g)$. \end{proof}

\begin{proof}[Proof of Proposition \ref{MAIN-C}] Let $(\eB^{(n)},\hat{g}^{(n)})$ be a sequence of geometric models and assume that it converges algebraically to a locally homogeneous limit space $(M^{(\infty)},g^{(\infty)})$. By Fact \ref{fact1}, Theorem \ref{MAIN-A} and Corollary \ref{corcomp}, by arguing as in the proof of Theorem \ref{MAIN-A} it follows that $\big|\sec(g^{(\infty)})\big|\leq1$ and that there exists a subsequence of $(\eB^{(n)},\hat{g}^{(n)})$ converging to the geometric model $(\eB^{(\infty)},\hat{g}^{(\infty)})$ of $(M^{(\infty)},g^{(\infty)})$ in the pointed $\cC^{\infty}$-topology. Moreover, again by Fact \ref{fact1} and \cite[Cor E.III.8]{BGM}, any convergent subsequence of $(\eB^{(n)},\hat{g}^{(n)})$ in the pointed $\cC^{\infty}$-topology necessarily converges to $(\eB^{(\infty)},\hat{g}^{(\infty)})$. Therefore, we conclude that the whole sequence $(\eB^{(n)},\hat{g}^{(n)})$ converges to $(\eB^{(\infty)},\hat{g}^{(\infty)})$ in the pointed $\cC^{\infty}$-topology. \end{proof}

\bigskip\bigskip
\font\smallsmc = cmcsc8
\font\smalltt = cmtt8
\font\smallit = cmti8
\hbox{\parindent=0pt\parskip=0pt
\vbox{\baselineskip 9.5 pt \hsize=5truein
\obeylines
{\smallsmc
Dipartimento di Matematica e Informatica ``Ulisse Dini'', Universit$\scalefont{0.55}{\text{\Aac}}$ di Firenze
Viale Morgagni 67/A, 50134 Firenze, ITALY}
\smallskip
{\smallit E-mail adress}\/: {\smalltt francesco.pediconi@unifi.it
}
}
}

\end{document}